\newtheorem{proposition}{Proposition}
\newtheorem*{remark}{Remark}
\newtheorem*{definition}{Definition}
\newtheorem{lemma}{Lemma}
\newtheorem{theorem}{Theorem}
\DeclareMathOperator{\cone}{cone}
\newcommand{\Tr}{\mathrm{Tr}\ }
\newcommand{\eps}{\varepsilon}
\newcommand{\HH}{\mathbb H}
\newcommand{\BH}{\mathsf B}
\newcommand{\PH}{\mathsf P}
\newcommand{\calC}{\mathcal C}
\subjclass[2020]{Primary 58J50; Secondary 58J35, 58J53, 35K08.}
\keywords{Heat trace, curvilinear polygons, curved corners, inverse spectral geometry}
\begin{document}

\title{The heat trace for domains with curved corners}

\author[S. Looi]{Sam Looi}\address{California Institute of Technology, The Division of Physics, Mathematics and Astronomy,
1200 E California Blvd, Pasadena CA 91125, USA}
\email{looi@caltech.edu}
\author[D. Sher]{David Sher}
\address{DePaul University, Department of Mathematical Sciences, 2320 N. Kenmore Ave., Chicago IL 60614, USA}
\email{dsher@depaul.edu}

\begin{abstract}
The heat trace of a planar polygon contains corner terms depending only on the opening angles, while the heat trace of a smooth planar domain contains curvature terms along the boundary. We show that, for curvilinear polygons, these two phenomena first interact at order $t^{1/2}$. We compute this first corner-curvature heat invariant and prove a sharp sign law for its Dirichlet angular factor: its sign is determined solely by whether the corner is convex or reflex. More precisely, we derive the local heat trace expansion through order $t^{1/2}$, for both Dirichlet and Neumann boundary conditions. The new coefficient decomposes into the usual smooth-boundary contribution and a sum of local curved-corner terms, each depending only on the interior angle $\alpha$ and the one-sided limiting curvatures $\kappa_{\pm}$ of the adjacent arcs. In the Dirichlet case, the curved-corner contribution has the form $\mathcal C_{1/2}(\alpha,\kappa_+,\kappa_-)
=
c_{1/2}(\alpha)\frac{\kappa_+ + \kappa_-}{4\sin(\alpha/2)}$, with $c_{1/2}(\alpha)$ given by an explicit sector heat kernel integral. We determine its sign for every $0<\alpha<2\pi$ and compute it explicitly when $\alpha=\pi/N$ for a natural number $N$. The sign law has a spectral consequence: it gives a new obstruction to a curvilinear polygon being Dirichlet isospectral to a straight-sided polygon. In particular, every convex curvilinear polygon which is Dirichlet isospectral to a straight-sided polygon must itself be straight-sided, removing the assumption of straight corners from the theorem of Enciso and Gómez-Serrano.
\end{abstract}

\maketitle

\section{Introduction}

A classical problem in spectral geometry is to determine what geometric information is encoded in the spectrum of the Laplacian \cite{kac1966can}. One of the main tools is the short-time expansion of the heat kernel trace. Since the trace is spectrally determined, every coefficient in this expansion is a spectral invariant. For smooth planar domains, McKean and Singer showed that these invariants include area, perimeter, and the Euler characteristic \cite{mckean1967curvature}. 

Geometric singularities in the underlying manifold or its boundary add local terms to the heat trace. For domains with corners, these terms are concentrated at the vertices. In the exact polygonal setting, the vertex terms were first computed in unpublished work of Ray, as cited in \cite{Cheeger1983,vanDenBergSrisat88}; the expansion was extended to curvilinear polygons in \cite{NRS}. Curvilinear polygons lie between the exact polygonal and smooth boundary cases: their boundaries have both curvature along the smooth arcs and angle singularities at the vertices. 
For exact polygonal domains, the vertex contribution through the constant term is a function only of the opening angle, and the same angle-only behavior persists for curvilinear polygonal corners at order $t^0$. Curvature along the regular arcs enters through the smooth-boundary terms. The coefficient of $t^{1/2}$ is therefore the first heat coefficient at which boundary curvature and corner geometry can interact. This paper computes the corresponding local curved-corner term. 

Let $\Omega \subset \mathbb{R}^2$ be a curvilinear polygon whose boundary is smooth except at finitely many corners $P_j$, with interior angles $\alpha_j > 0$, $j=1,\ldots,n$. Let $\kappa$ denote the inward-pointing curvature of the boundary, and let $\gamma(s)$ be an arc-length parametrization of $\partial \Omega$, oriented counterclockwise. For each $j$, let $\kappa_{j,+}$ and $\kappa_{j,-}$ be the one-sided limits of $\kappa(\gamma(s))$ as $s$ approaches $\gamma^{-1}(P_j)$ from above and below.  We show that at order $t^{1/2}$, the heat trace contains the familiar smooth-boundary contribution, a multiple of
\[
\int_{\partial \Omega} \kappa^2 ds,
\]
and a sum of local curved-corner contributions, 
\[ \mathcal C_{1/2}(\alpha_j,\kappa_{j,+},\kappa_{j,-}). \]
The first main theorem separates these two parts and shows that each corner contribution depends only on the opening angle and the two one-sided limiting curvatures of the adjacent arcs. 

\begin{theorem}\label{thm:thm1}
With notation as above, the Dirichlet heat trace $H^D_\Omega(t)$ has an expansion as $t \to 0$ given by
\begin{equation}\label{eq:heattrace}
\begin{aligned}
H^D_\Omega(t)
&=
\frac{|\Omega|}{4\pi t} - \frac{|\partial \Omega|}{8\sqrt{\pi t}} + \frac{1}{12\pi} \left( \int_{\partial \Omega} \kappa ds + \sum_{j=1}^n \frac{\pi^2-\alpha_j^2}{2\alpha_j} \right)
\\
&\quad
+ \sqrt{t} \left( \frac{1}{256\sqrt{\pi}} \int_{\partial \Omega} \kappa^2 ds + \sum_{j=1}^n \mathcal C_{1/2}(\alpha_j,\kappa_{j,+},\kappa_{j,-}) \right) + O(t\log t),
\end{aligned}
\end{equation}
where $\mathcal C_{1/2}$ is a function only of the angle $\alpha$ and the limiting curvatures $\kappa_\pm$.
\end{theorem}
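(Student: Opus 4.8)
\emph{Proof plan.} The plan is to localize the on-diagonal heat kernel with a partition of unity adapted to $\overline\Omega$, to treat an interior piece, a smooth-boundary collar, and a neighborhood of each corner by a tailored model, and then to reassemble. Fix a partition of unity $1 = \chi_{\mathrm{int}} + \chi_{\mathrm{bd}} + \sum_{j=1}^n\chi_j$ on $\overline\Omega$, where $\chi_j$ is supported in a neighborhood $U_j$ of $P_j$ meeting $\partial\Omega$ in exactly two smooth arcs through $P_j$ at angle $\alpha_j$, $\chi_{\mathrm{bd}}$ is supported in a collar of the smooth part of $\partial\Omega$ away from the corners, and $\chi_{\mathrm{int}}$ is supported at positive distance from $\partial\Omega$. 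Then $H^D_\Omega(t) = \int_\Omega\chi_{\mathrm{int}}H^D_\Omega(x,x,t)\,dx + \int_\Omega\chi_{\mathrm{bd}}H^D_\Omega(x,x,t)\,dx + \sum_j\int_\Omega\chi_j H^D_\Omega(x,x,t)\,dx$ identically, and using finite propagation together with the Gaussian bound $H^D_\Omega(x,y,t)\le(4\pi t)^{-1}e^{-|x-y|^2/4t}$ one may replace $H^D_\Omega$ inside each integral by a model kernel, at the cost of errors $O(e^{-c/t})$ in the interior piece, $O(t)$ in the collar piece, and $O(t\log t)$ in the corner pieces.

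\emph{Interior and collar.} On $\operatorname{supp}\chi_{\mathrm{int}}$ the model is the $\mathbb R^2$ kernel, so $H^D_\Omega(x,x,t) = (4\pi t)^{-1} + O(e^{-c/t})$ there, and combined with the leading $(4\pi t)^{-1}$ terms of the other two pieces this assembles to $|\Omega|/(4\pi t)$. For the collar I would pass to Fermi coordinates $(s,\rho)$, in which the flat metric reads $d\rho^2 + (1-\kappa(s)\rho)^2\,ds^2$ with a Dirichlet condition at $\rho = 0$; a standard parametrix for this model, expanded on the diagonal in powers of $\sqrt t$ and integrated in $\rho$ over $(0,\infty)$ and then in $s$ over $\partial\Omega$, contributes
\begin{equation*}
-\frac{|\partial\Omega|}{8\sqrt{\pi t}} + \frac{1}{12\pi}\int_{\partial\Omega}\kappa\,ds + \frac{\sqrt t}{256\sqrt\pi}\int_{\partial\Omega}\kappa^2\,ds + O(t),
\end{equation*}
the first two being the classical McKean--Singer terms and the $\int\kappa^2$ term the next local Dirichlet boundary heat invariant, read off from the parametrix (or from the heat-coefficient tables of Branson--Gilkey, cf.\ \cite{branson1990asymptotics}).

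\emph{Corner pieces: the new term.} Fix $j$ and let $\Phi_j$ be a conformal diffeomorphism of $U_j\cap\Omega$ onto a neighborhood of the vertex of the exact sector $S_{\alpha_j}$ of opening $\alpha_j$, carrying the two boundary arcs to the two straight sides; such a map exists and is as regular as the arcs, and after post-composing with a dilation of $S_{\alpha_j}$ we may take the transported metric to be $e^{2\phi_j}g_{\mathrm{Euc}}$ with $\phi_j$ vanishing at the vertex. The Dirichlet heat problem on $\Omega$ then becomes the conformally weighted problem $e^{2\phi_j}\,\partial_t v = \Delta_{S_{\alpha_j}}v$ on $S_{\alpha_j}$, Dirichlet on the sides, with $e^{2\phi_j} = 1 + \langle b_j,\cdot\rangle + Q_j + \cdots$ near the vertex, where the component of the linear coefficient $b_j$ along the bisector of $S_{\alpha_j}$ is an explicit function of $\alpha_j,\kappa_{j,+},\kappa_{j,-}$. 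I would build a parametrix for this problem on the sector heat space --- the parabolic blow-up of $S_{\alpha_j}\times S_{\alpha_j}\times[0,\infty)_t$ along $\{(\text{vertex},\text{vertex},0)\}$ and along the diagonal at $\{t=0\}$ --- with the exact-sector heat kernel (the Sommerfeld--Carslaw contour integral) as the model at the front face, and incorporate the $\phi_j$-dependent terms by Duhamel iteration. Upon taking traces: the exact-sector model is scale invariant, so it contributes only at order $t^0$, reproducing the classical vertex term $\frac{1}{12\pi}\cdot\frac{\pi^2-\alpha_j^2}{2\alpha_j}$; the first Duhamel correction is linear in $b_j$, scales like $\sqrt t$, and equals $t^{1/2}$ times an integral over $S_{\alpha_j}$ of $\langle b_j,\xi\rangle$ against a kernel built from the exact-sector heat kernel. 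That integral diverges at the vertex, along the edges, and at infinity, and must be purged of the interior and edge model contributions that are already counted in the collar step; its Hadamard finite part factors, by the reflection symmetry of $S_{\alpha_j}$, as a scalar $c_{1/2}(\alpha_j)$ (a finite-part trace over the exact sector, depending on $\alpha_j$ alone) times the bisector component of $b_j$, which is the quantity $r_0(\alpha_j,\kappa_{j,+},\kappa_{j,-})$. Hence the corner contributes $\frac{1}{12\pi}\cdot\frac{\pi^2-\alpha_j^2}{2\alpha_j}$ at order $t^0$ and $\mathcal C_{1/2}(\alpha_j,\kappa_{j,+},\kappa_{j,-}) = c_{1/2}(\alpha_j)\,r_0(\alpha_j,\kappa_{j,+},\kappa_{j,-})$ at order $\sqrt t$, a function of $\alpha_j,\kappa_{j,\pm}$ only; assembling the interior, collar and $n$ corner contributions gives \eqref{eq:heattrace}.

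\emph{Main obstacle.} The hard part will be the corner analysis: showing that the parametrix on the sector heat space is polyhomogeneous with index set through order $t$ equal to $\{0,\tfrac12,1\}$ together with a single $t\log t$ term and nothing below it, so that the traced expansion holds with remainder $O(t\log t)$; that the Hadamard finite part defining $c_{1/2}(\alpha)$ is well posed, its subtracted divergences matching exactly the collar and interior models on the overlaps of supports so that no term is counted twice; and that the quadratic term $Q_j$ and the higher Duhamel iterations feed in only at order $t$ or $t\log t$. The logarithm is expected precisely at order $t$, from the pairing of $Q_j$ with the slowly decaying part of the exact-sector heat-kernel defect (equivalently, a resonance with the indicial roots of the sector operator); isolating it, and ruling out any logarithm or fractional power below order $t$, is the delicate point.
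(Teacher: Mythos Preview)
Your overall strategy is sound and will prove Theorem~1, but it is organized differently from the paper and in fact does more than Theorem~1 requires. The paper separates the two tasks: for Theorem~1 it works entirely inside the geometric-microlocal framework of \cite{NRS}, using the double heat space $\Omega_h^2$ with faces $\td$, $\mathrm{sf}$, $\ff$ and Melrose's pushforward theorem to split the trace cleanly into face contributions. The key new ingredient is a short locality lemma: if $\Omega$ and $\Omega'$ share the same $\alpha$ and the same one-sided curvatures $\kappa_\pm$ at a corner, one uses $H^D_{\Omega'}$ itself to build a parametrix for $H^D_{\Omega}$ at $\ff$ to two orders, and the composition rule $\mathrm{ord}_{\ff}(A\star B)=\mathrm{ord}_{\ff}(A)+\mathrm{ord}_{\ff}(B)+4$ forces $H^D_\Omega-H^D_{\Omega'}=O(T^0)$ there. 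That immediately gives dependence of $\mathcal C_{1/2}$ on $(\alpha,\kappa_+,\kappa_-)$ alone, with no conformal model, no Duhamel, and no explicit corner computation. The conformal model and the sector-heat-space parametrix you describe are exactly what the paper uses, but only later, for Theorem~2.

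What each approach buys: your partition-of-unity route is more classical and self-contained, but the obstacles you correctly identify---matching the Hadamard subtractions at the corner against the collar and interior models so nothing is double-counted, and controlling the index set through order $t$---are precisely what the blow-up machinery absorbs automatically. In the paper's setup the $\td$, $\mathrm{sf}$, $\ff$ contributions are intrinsically disjoint after pushforward, the absence of logarithms below $t\log t$ follows from an index-set check as in \cite{NRS}, and the renormalization of the $\ff$ integral is the Hadamard finite part produced by the pushforward theorem rather than an ad hoc subtraction. Conversely, your route gets you closer to the explicit form of $\mathcal C_{1/2}$ in one pass.

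One inaccuracy to flag, though it lies in your Theorem~2 overreach rather than in Theorem~1 proper: the claim that reflection symmetry of $S_{\alpha_j}$ reduces the answer to the \emph{bisector component} of $b_j$, and that this component equals $r_0$, is not correct. In the paper's conformal model $\Phi(z)=z+z_0z^2$ one has $r_0=|z_0|$, the full magnitude of the linear coefficient, and the factorization $\mathcal C_{1/2}=c_{1/2}(\alpha)\,r_0$ comes from homogeneity: the order $T^{-3}$ error term at $\ff$ is exactly $r_0$ times an expression $b(R,\theta,R',\theta')$ still carrying $\theta_0=\arg z_0$. That $c_{1/2}(\alpha)$ is genuinely independent of $\theta_0$ (hence of $\kappa_\pm$) is a further point requiring justification; it is not simply ``bisector component times a universal constant.'' For Theorem~1 this does not matter---linearity in $b_j$ already gives dependence only on $(\alpha,\kappa_+,\kappa_-)$---but you should not carry the bisector claim forward.
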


We emphasize that the coefficient $\mathcal C_{1/2}$ computed here is a genuinely mixed corner curvature invariant. It is invisible in the exact-sector model, where $\kappa_{j,+}=\kappa_{j,-}=0$, and it is also invisible in the smooth-boundary expansion, whose local coefficients have no vertex contribution. In the ``straight corners'' case, where a neighborhood of each corner $P_j$ is isometric to an exact sector, this expansion has been computed in \cite{EGS_2017} using results from \cite{branson1990asymptotics}: there is no sum of $\mathcal C_{1/2}$ terms there. 

Our second main theorem gives a general formula for the Dirichlet corner contribution, valid for arbitrary opening angle and arbitrary one-sided limiting curvatures.  In particular, only the bisector-even combination $\kappa_+ + \kappa_-$ occurs.

For $0<\alpha<2\pi$, write \[ W_\alpha=\{re^{i\theta}:r>0,\ 0<\theta<\alpha\}, \qquad \HH=W_\pi, \] and let $q_s$ denote the point at distance $s$ from the vertex on the boundary ray $\theta=0$.  For a domain $\Omega$ with Dirichlet heat kernel $H_\Omega(t;z,z')$ and a regular boundary point $q\in\partial\Omega$, set \[ \mathsf B_\Omega(t,q) := \int_\Omega \left( \partial_{\nu_q}H_\Omega(t/2;q,z) \right)^2\,dz . \]
Let 
$$J_\alpha:=\int_0^\infty s^2
\left(
    \mathsf B_{\HH}(1,q_s)
    -
    \mathsf B_{W_\alpha}(1,q_s)
\right)\,ds .$$
\begin{theorem}\label{thm:moreid}
For $\alpha\in(0,2\pi)$, define the angle coefficient
\begin{equation}\label{eq:moreexplicitc}
c_{1/2}(\alpha)
:=
2\sin(\alpha/2)
\int_0^\infty s^2
\left(
    \mathsf B_{\HH}(1,q_s)
    -
    \mathsf B_{W_\alpha}(1,q_s)
\right)\,ds .
\end{equation}
Then, for all $\kappa_+,\kappa_-\in\mathbb R$,
\begin{equation}\label{eq:form}
\mathcal C_{1/2}(\alpha,\kappa_+,\kappa_-)
=
\frac{\kappa_+ + \kappa_-}{4\sin(\alpha/2)}
\,c_{1/2}(\alpha).
\end{equation}
Equivalently,
\begin{equation}\label{eq:C12-direct}
\mathcal C_{1/2}(\alpha,\kappa_+,\kappa_-)
=
\frac{\kappa_+ + \kappa_-}{2}
\int_0^\infty s^2
\left(
    \mathsf B_{\HH}(1,q_s)
    -
    \mathsf B_{W_\alpha}(1,q_s)
\right)\,ds .
\end{equation}

Moreover, with $\operatorname{sgn}(0):=0$,
\begin{equation}\label{eq:sign}
\operatorname{sgn} c_{1/2}(\alpha)
=
\operatorname{sgn}(\pi-\alpha),
\qquad \alpha\in(0,2\pi).
\end{equation}
If $N\ge 1$ and $\alpha=\pi/N$, then 
\begin{equation}\label{eq:I-pi-over-N}
J_{\pi/N}=\frac{1}{16\sqrt{\pi}}
\sum_{k=1}^{N-1}
\frac{1+\cos^2(\pi k/N)}
{\sin^3(\pi k/N)} ,
\end{equation}
where the sum is interpreted as $0$ when $N=1$. Hence
\begin{equation}\label{eq:c-pi-over-N}
c_{1/2}\left(\frac{\pi}{N}\right)
=
\frac{\sin(\pi/(2N))}{8\sqrt{\pi}}
\sum_{k=1}^{N-1}
\frac{1+\cos^2(\pi k/N)}
{\sin^3(\pi k/N)} .
\end{equation}
In particular,
\begin{equation}\label{eq:specialpi2}
c_{1/2}\left(\frac{\pi}{2}\right)
=
\frac{\sqrt{2}}{16\sqrt{\pi}},
\end{equation}
and hence
\begin{equation}\label{eq:C12-pi2}
\mathcal C_{1/2}\left(\frac{\pi}{2},\kappa_+,\kappa_-\right)
=
\frac{\kappa_+ + \kappa_-}{32\sqrt{\pi}} .
\end{equation}
\end{theorem}

\begin{remark} A few comments:
\begin{itemize}
\item The $O(t\log t)$ error in \eqref{eq:heattrace} is not necessarily optimal, and in fact we expect that it should be $O(t)$.
\item For the inverse spectral results at a non-straight corner, given below, the relevant fact is the nonvanishing of $c_{1/2}(\alpha)$. More precisely, \eqref{eq:sign} implies that $c_{1/2}(\alpha)\neq 0$ for every $0<\alpha<2\pi$ with $\alpha\neq\pi$, while $c_{1/2}(\pi)=0$. 
\item An alternative formula for $\BH_{\Omega}(t,q)$, with two derivatives but no integrals, is given by \eqref{eq:B-normal}.  By \eqref{eq:B-normal}, the quantity $J_\alpha$ can also be written without an integral over the domain:
\begin{equation}\label{eq:I-alpha-normal}
\begin{aligned}
J_\alpha
&=
\int_0^\infty s^2
\left[
\partial_{\nu_x}\partial_{\nu_y}H_{\HH}(1;x,y)\big|_{x=y=q_s}
-
\partial_{\nu_x}\partial_{\nu_y}H_{W_\alpha}(1;x,y)\big|_{x=y=q_s}
\right]ds  \\
&=
\int_0^\infty s^2
\left[
\frac{1}{4\pi}
-
\partial_{\nu_x}\partial_{\nu_y}H_{W_\alpha}(1;x,y)\big|_{x=y=q_s}
\right]ds .
\end{aligned}
\end{equation}
Here the normal derivatives are taken with respect to the outward normal along the ray $\theta=0$. The last expression is read as the displayed difference; the two terms do not define convergent integrals separately.
\item The conformal model that we use for $\alpha\ne\pi$ breaks down in the straight-angle case, as can be seen from the fact that the parameter $r_0$ in \eqref{eq:defofr0} is undefined when $\alpha=\pi$. However, \eqref{eq:sign} shows that a straight-angle vertex has no additional local contribution at order $t^{1/2}$.

\item In \eqref{eq:heattrace}, the corner contributions to the coefficients of $t^{-1}$ and $t^{0}$
depend only on the interior angles $\alpha_j$; the one-sided curvatures $\kappa_{j,\pm}$ do not appear.
Theorem~\ref{thm:moreid} shows that the first place where the corner contribution becomes sensitive to
$(\kappa_{j,+},\kappa_{j,-})$ is at order $t^{1/2}$.
\end{itemize}
\end{remark}

\subsection{The Neumann case}

The same local analysis gives the corresponding expansion for the Neumann heat trace. The first four terms in the Neumann heat trace expansion in the straight-corners setting are given in \cite{EGS_2017}.

\begin{proposition}
Under the same assumptions as in the Dirichlet case, the Neumann heat trace satisfies
\begin{equation}
\begin{aligned}
H^N_\Omega(t) &= \frac{|\Omega|}{4\pi t} + \frac{|\partial \Omega|}{8\sqrt{\pi t}} + \frac{1}{12\pi} \left( \int_{\partial \Omega} \kappa ds + \sum_{j=1}^n \frac{\pi^2-\alpha_j^2}{2\alpha_j} \right)
\\
&\quad + \sqrt{t} \left( \frac{5}{256\sqrt{\pi}} \int_{\partial \Omega} \kappa^2 ds + \sum_{j=1}^n \mathcal C^N_{1/2}(\alpha_j,\kappa_{j,+},\kappa_{j,-}) \right) + O(t\log t).
\end{aligned}
\end{equation}
\end{proposition}

We expect an analogue of the Dirichlet factorization for $\mathcal C^N_{1/2}$, but the argument in the Neumann case has additional boundary terms and we do not pursue it here.

\subsection{Inverse spectral applications}

The coefficient above has a direct inverse spectral consequence. Enciso and G\'omez-Serrano proved that any convex curvilinear polygon with straight corners that is Dirichlet-isospectral to a polygon must itself be a polygon \cite{EGS_2017}. The straight-corner hypothesis was needed because the curved-corner coefficient at order $t^{1/2}$ was not available. The formula above removes that missing input.

\begin{definition}
A curvilinear polygon $\Omega$ is admissible if, for each corner $P_j$ with opening angle $\alpha_j \neq \pi$,
\[
c_{1/2}(\alpha_j)(\kappa_{j,+}+\kappa_{j,-}) \geq 0.
\]
\end{definition}

Since $\sin(\alpha_j/2)>0$ for $0<\alpha_j<2\pi$, admissibility is equivalent to requiring that each non-straight corner term in \eqref{eq:form} be nonnegative. Straight angle points contribute zero by \eqref{eq:sign}.

The admissibility condition admits a clean geometric reading in light of \eqref{eq:sign}. Since $c_{1/2}(\alpha_j)>0$ for $\alpha_j\in(0,\pi)$ and $c_{1/2}(\alpha_j)<0$ for $\alpha_j\in(\pi,2\pi)$, admissibility reduces to the corner-by-corner sign condition
\[
\kappa_{j,+}+\kappa_{j,-}\ge 0 \quad \text{at convex corners } (\alpha_j<\pi),
\]
\[
\kappa_{j,+}+\kappa_{j,-}\le 0 \quad \text{at reflex corners } (\alpha_j>\pi).
\]
In particular, \textbf{every convex curvilinear polygon is admissible}: convexity gives $\alpha_j\in(0,\pi)$ and $\kappa_{j,+},\kappa_{j,-} \geq 0$ at every corner, so the condition holds automatically. Admissibility is therefore a strictly weaker hypothesis than convexity, while a gap between them is meaningful at reflex corners.

\begin{proposition}\label{prop:inverse_spectral_application}
    Suppose that an admissible curvilinear polygon $\Omega$ is Dirichlet isospectral to a polygon. Then $\Omega$ is a polygon.
\end{proposition}
\begin{proof}
For a polygon, the $t^{1/2}$ term in the heat trace vanishes. So it must vanish for $\Omega$. But under our assumptions, by Theorems \ref{thm:thm1} and \ref{thm:moreid}, the $t^{1/2}$ coefficient in $H^D_{\Omega}(t)$ is given by 
    \begin{equation}\label{eq:Dirichlet_iso}
    \frac{1}{256\sqrt\pi}\int_{\partial\Omega}\kappa^2\, ds + \sum_{j=1}^n c_{1/2}(\alpha_j)\,\frac{\kappa_{j,+}+\kappa_{j,-}}{4\sin(\alpha_j/2)}.
    \end{equation}
Note that $\int_{\partial\Omega}\kappa^{2}\,ds\ge 0$. By admissibility, every corner term in \eqref{eq:Dirichlet_iso} is also nonnegative. Since the full $t^{1/2}$ coefficient vanishes, each term in \eqref{eq:Dirichlet_iso} must vanish. In particular,
\[
\int_{\partial\Omega}\kappa^2\,ds=0.
\]
Hence $\kappa\equiv 0$ on each smooth edge, so the one-sided limits $\kappa_{j,+}$ and $\kappa_{j,-}$ also vanish at every corner. Therefore $\partial\Omega$ is piecewise linear and $\Omega$ is a polygon.
\end{proof}

We record the generalization of the result of Enciso and G\'omez-Serrano \cite{EGS_2017} as an explicit corollary:
\begin{proposition}
    Suppose that $\Omega$ is a convex curvilinear polygon which is Dirichlet isospectral to a polygon. Then $\Omega$ is a polygon.
\end{proposition}
In particular we may remove their straight-corners hypothesis.

\subsection{Circular and annular sectors}

We state two model families in which the coefficient of $t^{1/2}$ in \eqref{eq:heattrace} admits a closed expression. These formulas give concrete examples of the contribution of curved corners to the heat trace at order $t^{1/2}$.

For a curvilinear polygon $\Omega$, write
\begin{equation}\label{eq:def-a12}
H^D_{\Omega}(t)= \cdots + a_{1/2}(\Omega)\,\sqrt t + O(t\log t),
\qquad
a_{1/2}(\Omega)=\frac{1}{256\sqrt{\pi}}\int_{\partial\Omega}\kappa^2\,ds
+\sum_{j=1}^n\mathcal{C}_{1/2}(\alpha_j,\kappa_{j,+},\kappa_{j,-}).
\end{equation}
In the examples below, every curved corner is a right angle. We therefore use the special value
\begin{equation}\label{eq:c12-pi2}
c_{1/2} \left(\frac{\pi}{2}\right)=\frac{\sqrt{2}}{16\sqrt{\pi}}.
\end{equation}
Since $\sin(\pi/4)=\sqrt{2}/2$, \eqref{eq:form} reduces in this case to
\begin{equation}
\mathcal{C}_{1/2} \left(\frac{\pi}{2},\kappa_{+},\kappa_{-}\right)
=\frac{\kappa_+ + \kappa_-}{32\sqrt{\pi}}.
\end{equation}

\textbf{Circular sectors.}
Let $R>0$ and $\alpha\in(0,2\pi)$, and set
\[
S(R,\alpha)=\{(r,\theta)\in\mathbb{R}^2:\ 0<r<R,\ 0<\theta<\alpha\}.
\]
The boundary of $S(R,\alpha)$ consists of two radial segments and one circular arc. There are three
corners: the apex with opening angle $\alpha$, and two right-angle corners at the endpoints of the arc.

The basic geometric quantities are
\begin{equation}\label{eq:sector-geom}
|S(R,\alpha)|=\frac{\alpha}{2}R^2,
\qquad
|\partial S(R,\alpha)|=(\alpha+2)R,
\qquad \int_{\partial S(R,\alpha)}\kappa\, ds = \alpha,\qquad
\int_{\partial S(R,\alpha)}\kappa^2\,ds=\frac{\alpha}{R}.
\end{equation}
The formula \eqref{eq:heattrace} then becomes, after a bit of simplification,
\begin{equation}
    H^D_{S(R,\alpha)}(t) = \frac{\alpha R^2}{8\pi t} - \frac{(\alpha +2)R}{8\sqrt{\pi t}} + \frac{1}{12\pi}\Big(\frac{3\pi}{2} + \frac{\pi^2+\alpha^2}{2\alpha}\Big)+a_{1/2}(S(R,\alpha))\sqrt t+O(t\log t).
\end{equation}

To find $a_{1/2}$, observe that at the apex, $(\kappa_{+},\kappa_{-})=(0,0)$, hence $\mathcal{C}_{1/2}=0$. At each endpoint of the
circular arc, the corner angle is $\pi/2$ and the limiting curvatures are $(\kappa_{+},\kappa_{-})=(1/R,0)$,
so \eqref{eq:C12-pi2} gives a contribution $1/(32\sqrt{\pi}R)$. Summing the two endpoints yields
\begin{equation}\label{eq:sector-corner-sum}
\sum_{j}\mathcal{C}_{1/2}(\alpha_j,\kappa_{j,+},\kappa_{j,-})
=\frac{1}{16\sqrt{\pi}}\cdot \frac{1}{R}.
\end{equation}
Substituting \eqref{eq:sector-geom} and \eqref{eq:sector-corner-sum} into \eqref{eq:def-a12} gives
\begin{equation}\label{eq:sector-a12}
a_{1/2} \bigl(S(R,\alpha)\bigr)
=\frac{1}{256\sqrt{\pi}}\left(\frac{\alpha}{R}\right)
+\frac{1}{16\sqrt{\pi}}\left(\frac{1}{R}\right)
=\frac{\alpha+16}{256\sqrt{\pi}}\cdot \frac{1}{R},
\end{equation}
which completes our calculation.

Suppose that two circular sectors are Dirichlet isospectral. Then they must be congruent; their congruence can already be deduced from the coefficients of orders $t^{-1}$, $t^{-1/2}$ and $t^0$. We give an alternative proof here to show that the coefficients $a_{-1}$ and $a_{1/2}$ are enough by themselves.

\begin{proposition}[Spectral determination within the sector family]\label{prop:sector-rigid}
If two circular sectors $S(R,\alpha)$ and $S(\widetilde R,\widetilde\alpha)$ are Dirichlet isospectral,
then $(R,\alpha)=(\widetilde R,\widetilde\alpha)$.
\end{proposition}

\begin{proof}
Dirichlet isospectrality implies equality of the heat invariants $|\partial\Omega|$ and $a_{1/2}(\Omega)$.
Using \eqref{eq:sector-geom} and \eqref{eq:sector-a12}, we compute
\[
|\partial S(R,\alpha)|\cdot 256\sqrt{\pi}\,a_{1/2} \bigl(S(R,\alpha)\bigr)
=(\alpha+2)R\cdot \frac{\alpha+16}{R}
=(\alpha+2)(\alpha+16).
\]
Hence $\alpha$ is determined as the unique positive solution of
\[
x^2+18x+32=|\partial\Omega|\cdot 256\sqrt{\pi}\,a_{1/2}(\Omega),
\]
and then $R=|\partial\Omega|/(\alpha+2)$. Uniqueness of $\alpha$ follows since $x^2+18x+32$ is strictly
increasing on $(0,\infty)$.
\end{proof}

\textbf{Annular sectors.}
Let
\[
A(r,R,\alpha)=\{(\rho,\theta)\in\mathbb R^2:\ r<\rho<R,\ 0<\theta<\alpha\}.
\]
Then
\begin{equation}\label{eq:annular-aearly}
|A(r,R,\alpha)|=\frac{\alpha}{2}(R^2-r^2),
\qquad
|\partial A(r,R,\alpha)|=\alpha(R+r)+2(R-r).
\end{equation}
For annular sectors the coefficient $a_0$ carries no additional information: one has $\int_{\partial\Omega}\kappa\,ds=0$ and all four corner angles equal $\pi/2$, hence $a_0=\tfrac14$ for every $A(r,R,\alpha)$. Hence among the heat invariants through order $\sqrt t$, the only geometrically nontrivial data are $(|\Omega|,|\partial\Omega|,a_{1/2}(\Omega))$.

Thus we have 
\begin{equation}
    H^D_{A(r,R,\alpha)}(t)=\frac{\alpha(R^2-r^2)}{8\pi t} - \frac{\alpha(R+r)+2(R-r)}{8\sqrt{\pi t}} + \frac 14 + a_{1/2}(A(r,R,\alpha))\sqrt t+O(t\log t).
\end{equation}
Using \eqref{eq:annular-aearly}, together with the fact that the two outer corners have
$(\kappa_+,\kappa_-)=(1/R,0)$ and the two inner corners have $(\kappa_+,\kappa_-) = (-1/r,0)$,
we obtain from \eqref{eq:C12-pi2}
\begin{equation}\label{eq:annular-a12}
a_{1/2}\bigl(A(r,R,\alpha)\bigr)
=\frac{1}{256\sqrt{\pi}}\left(\frac{\alpha+16}{R}+\frac{\alpha-16}{r}\right).
\end{equation}

This additional coefficient may be used to distinguish annuli which have equal heat coefficients up to that point. For example, set
\[
\Omega_1=A \left(1,3,\frac14\right),
\qquad
\Omega_2=A \left(2,4,\frac16\right).
\]
A direct check gives
\[
|\Omega_1|=|\Omega_2|=1,
\qquad
|\partial\Omega_1|=|\partial\Omega_2|=5,
\]
so $\Omega_1$ and $\Omega_2$ have the same heat coefficients through $t^0$ within this model family.
However,
\[
a_{1/2}(\Omega_1)
=\frac{1}{256\sqrt{\pi}}\left(\frac{16+\frac14}{3}+\left(\frac14-16\right)\right)
=-\frac{31}{768\sqrt{\pi}},
\qquad
a_{1/2}(\Omega_2)
=\frac{1}{256\sqrt{\pi}}\left(\frac{16+\frac16}{4}+\frac{\frac16-16}{2}\right)
=-\frac{31}{2048\sqrt{\pi}},
\]
so $a_{1/2}(\Omega_1)\neq a_{1/2}(\Omega_2)$. Hence $a_{1/2}$ distinguishes noncongruent annular sectors
that are indistinguishable by the coefficients through order $t^0$.

That said, even after including $a_{1/2}$, the data $(A=|\Omega|,P=|\partial\Omega|,a_{1/2})$ can arise from two distinct triples $(r,R,\alpha)$,
hence from two noncongruent annular sectors.
To see this, set $s=R+r$ and $d=R-r$. We have $A=\frac{\alpha}{2}ds$ and $P=\alpha s+2d$, so eliminating $\alpha$
gives
\begin{equation}\label{eq:annular-elim-alpha}
P=\frac{2A}{d}+2d.
\end{equation}
For fixed $(A,P)$ with $P>4\sqrt{A}$, the quadratic $2d^2-Pd+2A=0$ has two positive roots, so $d$ is not
uniquely determined by $(A,P)$.
Moreover, from \eqref{eq:annular-a12} we deduce
\begin{equation}\label{eq:annular-s-equation}
\bigl(256\sqrt{\pi}\,a_{1/2}(\Omega)\bigr)\,(s^2-d^2)=\frac{8A}{d}-64d.
\end{equation}
For each fixed $d>0$, this determines $s^2$, and hence determines the unique value $s>d$.
Thus each admissible choice of $d$ determines a unique triple $(r,R,\alpha)$.

Therefore, even after incorporating the curved-corner invariant in Theorem~\ref{thm:moreid}, the data
$(A,P,a_{1/2})$ can correspond to two noncongruent annular sectors. 
We expect---but do not prove here---that the two annular sectors with equal $(A, P, a_{1/2})$ are always distinguished by the first Dirichlet eigenvalue. It would be interesting to determine whether the family $\{A(r, R, \alpha)\}$ admits genuine Dirichlet-isospectral pairs.

\subsection{Plan of the paper}

In order to understand the interaction of curvature and corners, we need a precise description of the short-time heat kernel near such a corner. The heat kernel has different
short-time behaviors in different geometric regimes: near an interior point,
near a smooth boundary point, and near a corner at the scale
$\operatorname{dist}(z,P)\sim \sqrt t$. In section \ref{sec:gm}, we use geometric microlocal analysis to
separate these regimes cleanly. Concretely, one replaces the usual heat-kernel
space by a resolved heat space in which the collision of the two spatial
variables with the corner, as $t\to0$, is replaced by a new boundary component
recording the rescaled variables
\[
R=\frac{r}{\sqrt t},\qquad R'=\frac{r'}{\sqrt t}.
\]
Thus the corner is not treated as an unresolved singular point; it is replaced
by a model problem on the tangent sector. In the standard terminology this new
boundary component is the corner front face, but the important point is simply
that it isolates the heat flow occurring at the parabolic scale of the corner.

This separation has two uses. First, it gives a locality theorem for the
order-$t^{1/2}$ corner term. If two curvilinear corners have the same opening
angle and the same one-sided limiting curvatures, then their heat kernels have
the same expansion through the orders that can contribute to the local
$t^{1/2}$ corner coefficient. The proof constructs an approximate heat kernel
whose model terms agree in the interior, along the smooth boundary arcs, and in
the rescaled corner variables. The equality of
$(\alpha,\kappa_+,\kappa_-)$ is precisely what makes these model terms
compatible where the smooth-boundary and corner regimes meet. The remaining
error is then removed by a Volterra series, and the heat-calculus composition
theorem shows that the correction terms are too high order to affect the
coefficient under consideration. Hence the new corner coefficient depends only
on $(\alpha,\kappa_+,\kappa_-)$. This is carried out in Section \ref{sec:gm}.

Second, locality allows the coefficient to be computed in a convenient model, which we do in section \ref{sec:conformal}.
For $\alpha\ne\pi$ we replace a general curved corner by the image of an exact
sector under the quadratic conformal map
\[
w=z+z_0z^2.
\]
The parameter $z_0$ realizes arbitrary one-sided limiting curvatures
$\kappa_+$ and $\kappa_-$. Pulling back to the exact sector changes the
operator to
\[
F^{-1}\Delta,\qquad F=|1+2z_0z|^2.
\]
Thus the angular singularity is still represented by the exact sector heat
kernel, while the first curvature jet appears as the first perturbation of the
operator. We construct a sector parametrix incorporating this perturbation and
extract the finite-part trace coefficient. An argument using reflection symmetry across the
bisector eliminates the odd curvature combination and gives the factorization
\[
\mathcal C_{1/2}(\alpha,\kappa_+,\kappa_-)
=
c_{1/2}(\alpha)\frac{\kappa_+ + \kappa_-}{4\sin(\alpha/2)},
\]
along with an explicit but unwieldy formula for $c_{1/2}(\alpha)$ involving a Hadamard finite-part integral.

In section \ref{sec:specialpi2}, we prove \eqref{eq:specialpi2}, i.e. compute $c_{1/2}(\pi/2)$, by computing a sufficient number of terms in the Dirichlet heat trace expansion for a unit half-disk and comparing with \eqref{eq:heattrace}. Section \ref{sec:specialpi2} also contains the corresponding calculation for the Neumann case, identifying $\mathcal C^N_{1/2}\bigl(\frac{\pi}{2},1,0\bigr)$.

Finally, in section \ref{section:sign}, we compute the general formula \eqref{eq:moreexplicitc} and prove \eqref{eq:sign}. The Hadamard finite-part formula obtained from the parametrix in section \ref{sec:conformal} is explicit but does not make
the sign transparent. We instead deform the sector symmetrically by
\[
\Phi_\varepsilon(z)=z+\varepsilon e^{i(\pi-\alpha/2)}z^2,
\]
so that both one-sided limiting curvatures are
$2\varepsilon\sin(\alpha/2)$. For this deformation,
$c_{1/2}(\alpha)$ is the first variation of the local curved-corner coefficient
at $\varepsilon=0$. The Dirichlet domain variation formula expresses this
first variation as a boundary integral involving the square of the heat
Poisson kernel on an exact sector. After adding and subtracting the model half-plane contribution, the
coefficient becomes
\[
c_{1/2}(\alpha)
=
2\sin(\alpha/2)\int_0^\infty s^2
\left(\mathsf B_{\HH}(1,q_s)-\mathsf B_{W_\alpha}(1,q_s)\right)\,ds.
\]
Domain monotonicity for the Dirichlet heat kernel, together with the parabolic
Hopf lemma at the common boundary ray, then gives
\[
\operatorname{sgn} c_{1/2}(\alpha)=\operatorname{sgn}(\pi-\alpha).
\]
We also deduce the formula \eqref{eq:I-pi-over-N} in the case $\alpha=\pi/N$.

\section*{Acknowledgements}
S.\,L. was supported by a Taussky--Todd Fellowship and by NSF grant DMS-2346799. D.\,S. is grateful for support from the AMS-Simons research enhancement grant 501949-9208. He would also like to thank the Isaac Newton Institute for Mathematical Sciences, Cambridge, for support and hospitality during the programme Geometric spectral theory and applications, where work on this paper was undertaken. This work was supported by EPSRC grant EP/Z000580/1.

We thank Javier G\'omez-Serrano for suggesting the problem to us and Antoine Song for feedback on the paper.

\section{The corner contribution}\label{sec:gm}

Our approach to understanding the heat kernel on $\Omega$ is based on the methods of geometric microlocal analysis \cite{MelroseAPS, NRS}. The strategy is to first resolve the singularities of the heat kernel at $t=0$ by constructing a ``double heat space,'' $\Omega_h^2$, via a sequence of geometric blow-ups. The heat kernel lifts under these blow-ups to a well-behaved (``polyhomogeneous conormal'') function on the manifold with corners $\Omega_h^2$ \cite{NRS}. Roughly speaking, polyhomogeneous conormal functions are those whose asymptotic expansions at the boundary hypersurfaces are compatible and jointly polyhomogeneous at higher-codimension corners; the precise definition may be found in \cite{MelroseAPS}.

The boundary of $\Omega_h^2$ consists of boundary hypersurfaces corresponding to distinct asymptotic regimes. Among these hypersurfaces are:
\begin{itemize}
    \item The interior diagonal (td), modeling heat flow far from the boundary.
    \item The side faces (sf), modeling heat flow near a smooth edge.
    \item The front faces (ff), modeling heat flow at a corner.
\end{itemize}

One can, via geometric microlocal analysis, construct a parametrix $K(t,z,z')$---an approximate solution to the heat equation---by specifying its leading order behavior at each of the three boundary hypersurfaces td, sf, and ff. This should be done in such a way so that when the heat operator is applied to the parametrix $K(t,z,z')$, the result $R(t,z,z')$ is lower order at td, sf, and ff. In order to arrange this, we choose leading order behavior that solves the relevant model problems for the heat equation at each boundary hypersurface. We also must ensure that the specified behaviors at td, sf, and ff are consistent with each other at the intersections of the boundary hypersurfaces, so that there actually exists a polyhomogeneous conormal function on $\Omega_h^2$ with all of the correct behavior. Once we have a good enough parametrix, we can recover the true heat kernel via the Volterra series
\[H^D_{\Omega} = K - K\star R + K\star R\star R - \cdots.\]
To analyze this Volterra series, we use a composition formula for polyhomogeneous conormal functions on $\Omega_h^2$, which in our setting is \cite[Theorem 3.16]{NRS}. 

To be specific about what we mean when we talk about orders: set $T=\sqrt t$. We measure orders at each of td, sf, and ff in powers of $T$; a function which has leading order behavior $T^{-2}$ has order $-2$. ``Lower order'' means larger powers of $T$. Throughout, ``vanishes to infinite
order at \textrm{td} (or \textrm{sf}, or \textrm{ff})'' means the expansion has no terms of any finite $T$-order there.

The model cone heat kernel has order $-2$ at td, sf, and ff. We write
$\operatorname{ord}_{\mathrm{ff}}(\cdot)$ for this $T$-order. 
The key result of \cite[Theorem 3.16]{NRS} for our setting is that if $B$ vanishes to infinite order at td, then
$\operatorname{ord}_{\mathrm{ff}}(A\star B)=\operatorname{ord}_{\mathrm{ff}}(A)
+\operatorname{ord}_{\mathrm{ff}}(B)+4$.

As a first use of these techniques, we show that the corner contribution $\mathcal C_{1/2}(\alpha,\kappa_{+},\kappa_{-})$ indeed depends only on $\alpha$ and $\kappa_{\pm}$. Suppose that there exist two domains $\Omega$ and $\Omega'$, each with one corner, with the same $\alpha$ and the same $\kappa_{\pm}$. By \cite{NRS}, the heat kernels $H^D_{\Omega}$ and $H^D_{\Omega'}$ are polyhomogeneous conormal on $\Omega_h^2$ and $(\Omega')_h^2$ respectively. Since $\alpha$ is the same, the faces ff of these respective heat spaces are geometrically identical. We now state a lemma which informally says that: under these conditions, $H^D_{\Omega}-H^D_{\Omega'}$ has leading order at worst $t^0$ at ff.

\begin{lemma}\label{lem:local-corner}
Let $\Omega$ and $\Omega'$ be domains with a single corner having the same opening angle
$\alpha$ and the same one-sided boundary curvatures $\kappa_\pm$ at that corner.
Then $H^D_\Omega-H^D_{\Omega'}$ has order $\le T^{0}$ (at worst $T^0 = t^0$) at ff.
Equivalently, their ff coefficients of orders $T^{-2} = t^{-1}$ and $T^{-1} = t^{-1/2}$ agree.
\end{lemma}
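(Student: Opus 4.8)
The plan is to compare the parametrices of \cite{NRS} for $\Omega$ and $\Omega'$ near their respective corners and then transfer the comparison to the true heat kernels through the Volterra series. Since ff, and the notion of $T$-order along it, is determined by an arbitrarily small neighborhood of the corner, one may work locally throughout. The starting point is that, because the opening angles coincide, the faces ff of $\Omega_h^2$ and $(\Omega')_h^2$ are canonically identified — both are the heat space of the infinite sector $S_\alpha$ — and the leading model problem at ff, whose solution is the exact Dirichlet heat kernel on $S_\alpha$, is literally the same problem for the two domains; hence the $T^{-2}$ coefficient at ff is common to $\Omega$ and $\Omega'$. The heart of the argument is to show the same for the $T^{-1}$ coefficient. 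In the NRS construction this coefficient is fixed by solving, one order past the leading term, a model problem on $S_\alpha$ whose inhomogeneous data record the leading deviation of the boundary arcs from the rays of $S_\alpha$ — governed exactly by the one-sided curvatures $\kappa_\pm$ at the corner — subject to Dirichlet conditions on $\partial S_\alpha$ and to matching where ff meets the side faces and the interior diagonal. The interior-diagonal match is with the Euclidean heat kernel, which is homogeneous (no $T^{-1}$ term) and the same for both domains; the side-face match is with the first two terms of the sf expansion near the corner, namely the half-plane Dirichlet heat kernel and its first curvature correction, which again involve the domain only through $\kappa_\pm$. Thus all the data defining the $T^{-1}$ ff coefficient depend on the domain only through $\alpha$ and $\kappa_\pm$, so this coefficient too is common to $\Omega$ and $\Omega'$.

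With that established, the rest is bookkeeping. Choose parametrices $K$ for $\Omega$ and $K'$ for $\Omega'$ that realize these common ff coefficients at orders $T^{-2}$ and $T^{-1}$ (so that $R=(\partial_t-\Delta)K$ and $R'=(\partial_t-\Delta)K'$ have ff-order $\ge -1$), and that are constructed to be exact at td so that $R$ and $R'$ vanish to infinite order there. Then $K-K'$ has ff-order $\ge 0$. Plugging this into $H^D_\Omega = K - K\star R + K\star R\star R - \cdots$ and using the composition bound $\operatorname{ord}_{\mathrm{ff}}(A\star B)=\operatorname{ord}_{\mathrm{ff}}(A)+\operatorname{ord}_{\mathrm{ff}}(B)+4$, valid since every $R^{\star k}$ still vanishes to infinite order at td, each correction $K\star R^{\star k}$ with $k\ge 1$ has ff-order $\ge -2+3k\ge 1$. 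Hence $H^D_\Omega-K$ and $H^D_{\Omega'}-K'$ both have ff-order $\ge 1$, so
\[
H^D_\Omega-H^D_{\Omega'} = (K-K') + (H^D_\Omega-K) - (H^D_{\Omega'}-K')
\]
has ff-order $\ge 0$, which is the claim; equivalently, the $T^{-2}$ and $T^{-1}$ coefficients at ff agree.

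The routine ingredients are the localization (the sf behavior away from the corner genuinely differs for $\Omega$ and $\Omega'$, but it has no bearing on the ff-order), the standard exact-at-td construction, and the Volterra bookkeeping just indicated. The one substantial step — and where I expect the real work to lie — is the rigidity claim of the first paragraph: one must unwind the NRS model and matching conditions exactly one order past the leading term and check that no information about higher boundary jets, such as $\kappa_\pm'$ or the curvature of the sides away from the corner, leaks into the $T^{-1}$ ff coefficient. Such data first enter at order $T^0$, which is precisely why the lemma is stated only through $T^{-1}$.
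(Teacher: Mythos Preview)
Your argument is sound in outline but takes a different route from the paper's, and the paper's route is cleaner precisely at the point you flag as ``the one substantial step.'' You argue \emph{rigidity} of the $T^{-1}$ ff coefficient directly: the inhomogeneous model problem on $S_\alpha$ and the matching constraints at $\mathrm{td}\cap\mathrm{ff}$ and $\mathrm{sf}\cap\mathrm{ff}$ involve only $\alpha$ and $\kappa_\pm$, hence so does the coefficient. This is correct but, as you yourself note, requires unwinding the NRS construction one order deeper and implicitly uses that the model problem has a unique solution in the relevant polyhomogeneous class. The paper bypasses this entirely by a transplantation trick: it builds a \emph{single} parametrix $K$ for $H^D_\Omega$ whose ff behavior at orders $T^{-2}$ and $T^{-1}$ is \emph{declared} to be that of the actual kernel $H^D_{\Omega'}$, and then checks only that this prescription is consistent with the sf prescription at $\mathrm{sf}\cap\mathrm{ff}$ --- which it is precisely because the first two sf terms are universal functions of $\kappa$, and $\kappa_\pm$ match. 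No rigidity analysis is needed; the Volterra bookkeeping then gives $H^D_\Omega-K=O(T^0)$ at ff, and trivially $H^D_{\Omega'}-K=O(T^0)$ at ff since $K$ was built from $H^D_{\Omega'}$ there. Your approach, by contrast, uses two parametrices $K,K'$ and must also make sense of $K-K'$ across two different heat spaces, which is fine at ff but slightly awkward globally.

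One bookkeeping slip: having the correct $T^{-2}$ and $T^{-1}$ ff coefficients gives $R$ ff-order $\ge -2$, not $\ge -1$ (the heat operator drops the order by $2$, and solving two orders recovers $2$, landing at $-4+2=-2$). Consequently $\operatorname{ord}_{\mathrm{ff}}(K\star R^{\star k})\ge -2+2k$, not $-2+3k$. This still gives $\ge 0$ for $k\ge 1$, so your conclusion survives, but the paper's arithmetic ($2n-2\ge 0$) is the correct one.
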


\begin{proof} We use $H^D_{\Omega'}$ to build a parametrix $K$ for $H^D_{\Omega}$. We will build this parametrix $K$ for $H^D_\Omega$ by prescribing its leading behavior at the three faces. Specifically:
\begin{itemize}
\item At td, we specify that $K$ should have the usual interior expansion, which in this setting is simply the Euclidean heat kernel on $\mathbb R^2$, which has order $-2$ with no lower order terms.
\item 
At \textrm{sf}, we specify that $K$ should have the same expansion as the Dirichlet heat kernel on a smooth domain with boundary whose boundary coincides with the appropriate smooth part of $\partial\Omega$. Such a heat kernel is known to have a polyhomogeneous conormal expansion by \cite{Grieser}, and that expansion is consistent with the Euclidean heat kernel at \textrm{td}. The first term, at order $-2$, is universal. The second term, at order $-1$, follows from the boundary calculus construction in \cite{Grieser}; by a standard scaling argument, this coefficient is a universal multiple of the curvature $\kappa(z')$.
\item At ff, we specify that $K$ should have the same $t^{-1}$ (order $-2$) and $t^{-1/2}$ (order $-1$) terms as $H^D_{\Omega'}$. These are consistent with the interior expansion at td, since they are consistent for $H^D_{\Omega'}$.
\end{itemize}
We must check that the ff specifications are consistent with the sf ones. But this is the point of having $\kappa_{\pm}$ equal. Since the heat kernel $H^D_{\Omega'}$ is polyhomogeneous conormal, our ff specifications are consistent with the Euclidean behavior at td and also with the first two terms at sf on $(\Omega')_h^2$. But since $\kappa_{\pm}$ are identical between $\Omega$ and $\Omega'$, these terms on $(\Omega')_h^2$ have the same restrictions to sf$\cap$ff as the first two terms at sf on $\Omega_h^2$. Thus our ff and sf specifications are consistent. 

This consistency means that there exists a parametrix $K(t,z,z')$ with the specified expansions at td, sf, and ff. This parametrix solves the model problems to infinite order at td and sf and to two orders at ff. So $R(t,z,z')=(\partial_t-\Delta_{\Omega})K(t,z,z')$ has leading order $-2$, as opposed to $-4$, at ff, and leading order $\infty$ at td and sf. Using the composition formula of \cite{NRS}, we conclude that for each $n\ge 1$, $K\star(R^{\star n})$ has leading order at worst $t^0$ at ff. 
\[
  \operatorname{ord}_{\mathrm{ff}}(K\star R^{\star n})
  \ge (-2) + n(-2) + 4n = 2n-2 \ge 0
  \quad (n\ge1).
\]

Thus $$H^D_\Omega-K=\sum_{n\ge1}(-1)^n K\star R^{\star n}$$ has order at worst $0$ at ff, and since the same is true for $H^D_{\Omega'}-K$ by construction of $K$, so $H^D_\Omega-H^D_{\Omega'}=O(T^{0})$ at ff. The Lemma is proven. \end{proof}

We can now prove Theorem \ref{thm:thm1}.
\begin{proof}[Proof of Theorem \ref{thm:thm1}] As in \cite{NRS}, the heat trace is obtained by restricting $H^{D}_{\Omega}$ to the spatial diagonal and then integrating over $z\in\Omega$. The technical tool used is Melrose's pushforward theorem. We will not repeat all the details here, but the upshot is that each face td, sf, and ff gives a separate contribution to the expansion \eqref{eq:heattrace}. A priori, the expansions can interact to give logarithmic terms, but for the same reasons as in \cite[p. 49]{NRS}, keeping in mind that we now have two orders at ff rather than one, there are no logarithmic terms until at least $O(T^2\log T)=O(t\log t)$ (and potentially much later, if at all). The contribution from td is 
\[\frac{|\Omega|}{4\pi t} + O(t^{\infty}).\]
The contributions from sf are
\[-\frac{|\partial\Omega|}{8\sqrt{\pi t}} + \frac{1}{12\pi}\int_{\partial\Omega}\kappa\, ds +\frac{\sqrt t}{256\sqrt\pi}\int_{\partial\Omega}\kappa^2\, ds + O(t).\]
And the contributions from ff, since the first is already known and the second depends only on $\alpha$ and $\kappa_{\pm}$, are
\[\sum_{j=1}^{n}\frac{\pi^2-\alpha_j^2}{24\pi\alpha_j} + \sqrt t\sum_{j=1}^{n}\mathcal C_{1/2}(\alpha_j,\kappa_{j,+},\kappa_{j,-}) + O(t),\]
for some unknown function $\mathcal C_{1/2}(\alpha,\kappa_{+},\kappa_{-})$. Summing the three faces gives Theorem \ref{thm:thm1}.
\end{proof}
Everything in these proofs works in the Neumann case as well.

Note that we will perform a similar parametrix construction, in detail, in the next section, when we build a conformal model to find an expression for $\mathcal C_{1/2}(\alpha,\kappa_{+},\kappa_{-})$.

\section{The conformal model and the construction}\label{sec:conformal}

The goal of this section is to find an expression for the contribution $\mathcal C_{1/2}$. Since it depends only on $\alpha$ and on $\kappa_{\pm}$, we can work with a specific domain with those values. We take advantage of a conformal transformation to build such a domain, and then we construct its heat kernel.

\subsection{The conformal model domain}

We define a conformal mapping from $\mathbb C$ to $\mathbb C$. Let $w$ be the complex coordinate on the target space and $z$ be the complex coordinate on the source space. The Euclidean metric on the target $\mathbb{C}$ is $g_{Euc} = dw \, d\bar{w}$.
Fix $z_0\in\mathbb C$ and let $\Phi=\Phi_{z_0}: \mathbb{C} \to \mathbb{C}$ be the holomorphic map given by $$w = \Phi(z) = z+z_0z^2.$$ Near the origin, it is a small perturbation of the identity map.

To compute the pullback metric $\Phi^*(g_{Euc})$, we use the standard fact that for a holomorphic map $\Phi$, $dw = \Phi'(z) dz$ and $d\bar{w} = \overline{\Phi'(z)} d\bar{z}$.
Therefore,
\begin{align*} \Phi^*(g_{Euc}) &= \Phi^*(dw \, d\bar{w}) \\ &= (\Phi'(z) dz) (\overline{\Phi'(z)} d\bar{z}) \\ &= \Phi'(z) \overline{\Phi'(z)} dz \, d\bar{z} \\ &= |\Phi'(z)|^2 dz \, d\bar{z} \end{align*}
Thus, the pullback metric is:
$$ \Phi^*(g_{Euc}) = |1+2z_0z|^2 dz \, d\bar{z}. $$
Let $F(z) = |1+2z_0z|^2$ be the conformal factor derived above. An immediate calculation yields
\begin{equation}
F(z) = 1 + 4\Re(z_0z) + 4|z_0|^2|z|^2.
\end{equation}
If we write $z=x+iy$ and $z_0=u_0+iv_0$, we obtain
\begin{equation}
F(x+iy)=1 + 4(u_0x-v_0y) + 4(u_0^2+v_0^2)(x^2+y^2).
\end{equation}
On the other hand, if we write $z=re^{i\theta}$ and $z_0=r_0e^{i\theta_0}$, we obtain
\begin{equation}
F(re^{i\theta}) = 1 + 4r_0r\cos(\theta+\theta_0) + 4r_0^2r^2.
\end{equation}
We see that $F$, however it is written, is smooth, though of course not holomorphic, from $\mathbb C\to\mathbb R$. It is also smooth down to zero in polar coordinates $(r,\theta)$.

For later use, we also record the gradient of $F$ at the origin:
\begin{equation}
\nabla F(0,0)=\langle 4u_0,-4v_0\rangle = 4r_0\langle\cos\theta_0,-\sin\theta_0\rangle,
\end{equation}
where the latter expression is in polar coordinates but still measuring $x$ and $y$-derivatives.

We now revert to $\mathbb R^2$. Let $\Omega_0$ be the flat sector $\{z \in \mathbb{C} \mid 0 < \arg(z) < \alpha \}$. The map $\Phi$ takes $\Omega_0$ to a curvilinear domain $\Omega$ with the same corner angle $\alpha$. 
The signed curvatures of the two boundary edges of $\Omega$ at the corner are given by:
\begin{itemize}
    \item $\kappa_+$ (on the image of the $\theta=0$ ray): $2\Im(z_0)$
    \item $\kappa_-$ (on the image of the $\theta=\alpha$ ray): $-2\Im(z_0e^{i\alpha})$
\end{itemize}
Here the orientation is chosen so that the signed curvatures measure curvature in the inward-pointing direction, so that a convex domain will have $\kappa_+\ge 0$ and $\kappa_-\ge 0$. A domain with symmetry about $\arg(z)=\alpha/2$ will have $\kappa_+=\kappa_-$. 

By choosing the complex parameter $z_0$, we can match any desired curvatures $\kappa_+, \kappa_-$, as long as $\alpha\ne\pi$. Solve the following linear system for $z_0 = u_0 + iv_0$:
\begin{align*}
    \kappa_+ &= 2 v_0 \\
    \kappa_- &= -2 \Im((u_0+iv_0)(\cos\alpha +i\sin\alpha)) = -2(u_0\sin\alpha + v_0\cos\alpha)
\end{align*}
This yields $z_0=u_0+iv_0$, where
\begin{equation}
\begin{cases} u_0= & -\frac 12\kappa_+\cot\alpha - \frac 12\kappa_-\csc\alpha;\\ v_0 = & \frac 12\kappa_+.
\end{cases}
\end{equation}
In terms of the curvatures, we observe that
\begin{equation}\label{eq:defofr0}
r_0 = \frac{1}{2}|\csc\alpha| \sqrt{\kappa_+^2+2\kappa_+\kappa_-\cos\alpha+\kappa_-^2};\quad \theta_0=\textrm{arg}(u_0 + iv_0). 
\end{equation}
Equivalently,
\begin{equation}\label{eq:kappas-via-theta0}
\kappa_+=2r_0\sin\theta_0,
\qquad
\kappa_-=-2r_0\sin(\alpha+\theta_0).
\end{equation}
Hence
\begin{equation}\label{eq:bisector-amplitude}
r_0\cos\left(\theta_0+\frac{\alpha}{2}\right)
=-\frac{\kappa_+ + \kappa_-}{4\sin(\alpha/2)}.
\end{equation}

The pullback by $\Phi$ of the Euclidean metric on $\Omega$ is
\[ F \cdot g_{Eucl,\Omega_0},\]
where we now interpret $F$ as a function with domain $\mathbb R^2$ rather than $\mathbb C$.
By the formula for the Laplacian in two dimensions under a conformal change of metric $g = F g_0$, the new Laplacian is $\Delta_g = F^{-1} \Delta_{g_0}$. Therefore, the pullback Laplacian is
\[ \Phi^*(\Delta_{\Omega})=F^{-1}\Delta_{\Omega_0}. \]
We want to understand the heat kernel for the operator $\Delta_\Omega$ on the curved domain $\Omega$. By pulling back, this is equivalent to studying the heat kernel for the operator $F^{-1}\Delta_{\Omega_0}$ on $\Omega_0$. The corresponding heat equation is $\partial_t H = F^{-1}\Delta_{\Omega_0} H$. We define our heat operator on the model space $\Omega_0$ as:
\[ \mathcal L=\partial_t - F^{-1}\Delta_{\Omega_0}. \]

We now construct the heat kernel for $\mathcal L$. We use the techniques of \cite{NRS}, working at td, then at sf and ff. This corresponds to building a parametrix for the heat kernel, first at the interior diagonal, and then at the diagonal both near the boundary and near a corner. Once we have a sufficiently accurate parametrix, we iterate away the error via a Volterra series to produce the true heat kernel.

\subsection{The heat kernel at td}
It is proved in \cite{NRS} that the heat kernel for $\mathcal L$, being the pullback of the heat kernel for the Laplacian on the curvilinear polygon $\Omega$, is polyhomogeneous conormal on the double space of \cite{NRS}. We also know that its full expansion at td is the same as that for the heat kernel of the operator $\Phi^*(\Delta_{\mathbb R^2}) = F^{-1}\Delta_{\mathbb R^2}$. This expansion may be written in terms of the (interior) boundary defining function $T=\sqrt t$ as
\[T^{-2}a_{-2}+T^{-1}a_{-1}+T^0a_0+\ldots,\]
where $a_{-2}$, $a_{-1}$, $a_0$, etc. are smooth functions on the face td. 
We refer to $a_k$ as the td-coefficients of the heat kernel; in particular, $a_0$ denotes the coefficient of order $T^0$ at td, whose restriction along
$\mathrm{td}\cap\mathrm{ff}$ will determine $\mathcal{C}_{1/2}(\alpha,\kappa_+,\kappa_-)$.

From \cite{NRS}, these functions are smooth down to ff as well. The boundary defining function $T=\sqrt t$ vanishes to first order at ff (as at td). Thus a td term $T^k a_k$, viewed as a kernel, has order $k$ at ff.\footnote{The heat trace is obtained by restricting the kernel to the spatial diagonal and integrating against the Riemannian area element. Near ff, in polar coordinates about the vertex one has $r=TR$, so for fixed $t$ (hence fixed $T$) one gets $dr=T\,dR$ and \[ r\,dr\,d\theta = T^2\,R\,dR\,d\theta. \] Accordingly, the corresponding trace density carries an additional factor $T^2$, so the contribution of $T^k a_k$ to the trace integrand has order $k+2$ at ff. This is the bookkeeping in the pushforward argument at td$\cap$ff: the shift by $2$ rules out logarithmic terms at this intersection. Similar considerations apply at the other intersections.} Therefore, the only terms which are relevant to the leading and subleading terms at ff are $T^{-2}a_{-2}+T^{-1}a_{-1}$.

We now use the work of Grieser \cite{Grieser} to compute $a_{-2}$ and $a_{-1}$. We use his notation throughout this subsection: the coordinates are $(t,x,y)$, where $x\in\mathbb R^2$ is an interior coordinate on the first factor and $y\in\mathbb R^2$ on the second. The initial parametrix for the heat kernel is
\begin{equation}
    K_1(t,x,y)=\frac{1}{4\pi t}\exp\left(-\frac{F(y)|x-y|^2}{4t}\right).
\end{equation}
Computing the application of the heat operator $(\partial_t-F^{-1}(x)\Delta_{\mathbb R^2})$ to $K_1$, we obtain the error term
\begin{equation}
    R_1(t,x,y):=(\partial_t-F^{-1}(x)\Delta_{\mathbb R^2})K_1(t,x,y)=\left(1-\frac{F(y)}{F(x)}\right)\frac{\partial}{\partial t}K_1(t,x,y).
\end{equation}

\noindent   \textit{Notation (td symbols).}
In the rescaled variables $(t,X,y)$ with $X=(x-y)/\sqrt t$, any heat kernel $A$ will admit an asymptotic of the form
\begin{equation}\label{eq:asymptotic}A(t,x,y)\sim\sum_{q \in  d-\frac12\mathbb N_0} t^{-2-q}\,\Phi_q(A)(X,y)\end{equation} as $t\to0$ with $X$ fixed.
The coefficient $\Phi_q(A)$ is the \emph{td-symbol of order $q$} of the operator $A$; if $A$ has order $d$, then $\Phi_d(A)$ is the principal td-symbol of $A$. Referring to Grieser \cite{Grieser} for the definition of the various operator spaces $\Psi_H^d$, we have $K_1(t,x,y)\in\Psi_H^{-1}$. Roughly, this means that $K_1(t,x,y)$ has an expansion of the form \eqref{eq:asymptotic} with $d=-1$, hence leading term of order $t^{-1}$, and with coefficients decaying to infinite order at $|X|=\infty$.

The prefactor is $O(\sqrt t)$ near the diagonal. Since $K_1\in\Psi_H^{-1}$, one has $\partial_tK_1\in\Psi_H^{0}$, which has size $t^{-2}$, so
$R_1(t,y+\sqrt t\,X,y)$ has size $t^{-3/2}$ in td coordinates. Therefore $R_1\in\Psi_H^{-1/2}$ with principal td-symbol
\begin{equation}
    \Phi_{-3/2}(R_1)(X,y)=(\frac{X\cdot\nabla(F)(y)}{F(y)})(\frac{F(y)}{4}X^2-1)\frac{1}{4\pi}\exp(-\frac{F(y)}{4}X^2).
\end{equation}

By the heat kernel construction in Grieser \cite{Grieser}, the leading and subleading terms of the true heat kernel at td will be the same as those of
\[K_1-K_1*R_1.\]
$K_1$ has zero subleading term in the coordinates $(t,X,y)$, so for the subleading term we just need to understand $K_1*R_1$, in particular just the \emph{leading} term of $K_1*R_1$, which is $\Phi_{-3/2}(K_1*R_1)$. This can be computed using \cite[Proposition 2.6]{Grieser}. Observe that the integration in $Z$ is with respect to $F(y)\, dZ$, the metric at $y$. We obtain, including the minus sign,
\begin{multline}
    \Phi_{-3/2}(-K_1*R_1)(X,y)=-\int_0^1\int_{\mathbb R^2}(1-\sigma)^{-1}\sigma^{-3/2}\frac{1}{4\pi}\exp(-\frac{F(y)}{4}\frac{(X-Z)^2}{1-\sigma})\\
    \frac{Z\cdot\nabla(F)(y)}{\sqrt\sigma F(y)}(\frac{F(y)}{4}\frac{Z^2}{\sigma}-1)\frac{1}{4\pi}\exp(-\frac{F(y)}{4}\frac{Z^2}{\sigma})\, F(y) dZ\, d\sigma.
\end{multline}

This integral can be computed, which we now do. Let
\[c=\frac{F(y)}{4};\quad V=\nabla(F)(y);\quad \tilde Z=\frac{Z}{\sqrt{\sigma}};\quad \tilde X=\frac{X}{\sqrt{\sigma}}.\]
We obtain
\begin{multline}\label{integraltodo}
    \Phi_{-3/2}(-K_1*R_1)=-\frac{1}{16\pi^2}\int_0^1(1-\sigma)^{-1}\sigma^{-1/2}\\ \int_{\mathbb R^2}
    \tilde Z\cdot V(c\tilde Z^2-1)\exp(-c\tilde Z^2-c\sigma\frac{(\tilde X-\tilde Z)^2}{1-\sigma})\, d\tilde Z\, d\sigma.
\end{multline}
As usual, we can complete the square to do the inner integral. Returning to $X$, we get
\[\exp(-cX^2)\int_{\mathbb R^2}\tilde Z\cdot V(c\tilde Z^2-1)\exp(-\frac{c}{1-\sigma}(\tilde Z-\sqrt{\sigma}X)^2)\, d\tilde Z.\]
Let $\hat Z=\tilde Z-\sqrt{\sigma}X$:
\[\exp(-cX^2)\int_{\mathbb R^2}(\hat Z+\sqrt\sigma X)\cdot V(c(\hat Z+\sqrt\sigma X)^2-1)\exp(-\frac{c}{1-\sigma}\hat Z^2)\, d\hat Z.\]
Any of the terms in the prefactor
\[((\hat Z+\sqrt\sigma X)\cdot V)(c(\hat Z+\sqrt\sigma X)^2-1)\] 
which are odd in one of the coordinates of $\hat Z$ yield zero when integrated. Up to such terms, that pre-factor is
\[(\sqrt{\sigma}X\cdot V)(c\hat Z^2+c\sigma X^2-1)+2c\sqrt{\sigma}(\hat z_1^2x_1v_1+\hat z_2^2x_2v_2).\]
Using the usual formulas for Gaussian integrals, we get that the inner integral in \eqref{integraltodo} is
\[\exp(-cX^2)[(\sqrt\sigma X\cdot V)(c\sigma X^2-1)\frac{\pi(1-\sigma)}{c} + (\sqrt\sigma X\cdot V)\frac{2\pi(1-\sigma)^2}{c}]\]
\[=c^{-1}\pi\sigma^{1/2}(1-\sigma)\exp(-cX^2)(X\cdot V)(c\sigma X^2+1-2\sigma),\]
and therefore
\begin{equation}
    \Phi_{-3/2}(-K_1*R_1)=-\frac{1}{16c\pi}\exp(-cX^2)(X\cdot V)\int_0^1(c\sigma X^2+1-2\sigma)\, d\sigma,
\end{equation}
Performing the simple integration, we obtain
\begin{equation}
    \Phi_{-3/2}(-K_1*R_1)=-\frac{1}{32\pi}\exp(-cX^2)(X\cdot V)X^2.
\end{equation}
Inserting the variables, we have
\begin{equation}\label{eq:Phi_minus}
    \Phi_{-3/2}(-K_1*R_1)=-\frac{1}{32\pi}X^2(X\cdot\nabla F(y))\exp(-\frac{F(y)}{4}X^2).
\end{equation}

We can now write down the two leading orders of the heat kernel at td: they are

\begin{equation}\label{eq:kernel_at_td}
T^{-2}\frac{1}{4\pi}\exp(-\frac{F(y)}{4}X^2)- T^{-1}\frac{1}{32\pi}X^2(X\cdot\nabla F(y))\exp(-\frac{F(y)}{4}X^2).
\end{equation}
As one may check, this expression is in the kernel of $T^2\mathcal L$ up to $O(T^0)$.

\begin{remark} Two observations:
\begin{itemize}
    \item If $F$ is constant, the subleading term is zero. In fact all subleading terms are zero -- this is the Euclidean setting, where the initial parametrix is equal to the true heat kernel. %
    \item In all cases, $\Phi_{-3/2}(K_1*R_1)(0,y)=0$. This means that the restriction of $K_1*R_1$ to the spatial diagonal has no term of order $T^{-1}$ at td. This is unsurprising, as we do not expect any term of the form $T^{-1}=t^{-1/2}$ in the interior heat trace expansion.
\end{itemize}
\end{remark}

\subsection{A parametrix for the heat kernel}
We take our heat operator $\mathcal L$ and apply it to a parametrix
\[H^{(1)}:=G\cdot H_{cone},\]
where $H_{cone}$ is the Dirichlet heat kernel for an exact sector of angle $\alpha$, and where $G$ is a scalar function chosen as a corrector to allow $H^{(1)}$ to satisfy the heat equation to two orders at td. 

\noindent\textit{Notation.}
In this subsection we set $T=\sqrt t$ and use the rescaled difference vector
\[
X:=\frac{z-z'}{T}\in\mathbb R^2,
\qquad |X|=\frac{|z-z'|}{T}.
\]
We write $X\cdot\nabla F(z')$ for the Euclidean dot product and $|X|^2=X\cdot X$.

\medskip
Specifically, define
\begin{equation}\label{eq:G-def}
G:=\Bigl(1-\frac18\,T\,|X|^2\,(X\cdot\nabla F(z'))\Bigr)
\exp\Bigl[-\frac{F(z')-1}{4}\,|X|^2\Bigr].
\end{equation}
Then $G$ times the Euclidean heat kernel (the td model for $H_{\cone}$) yields
\eqref{eq:kernel_at_td}.
We do not claim that $G$ is polyhomogeneous on the double space; if $F(z')<1$ then the factor
$\exp \bigl(\frac{1-F(z')}{4}|X|^2\bigr)$ grows as $|X|\to\infty$ (note that $|X|=|z-z'|/T$).
However, $G$ is polyhomogeneous away from $\mathrm{td}$, and $G\,H_{cone}$ is polyhomogeneous on the
double space. Moreover, $G\,H_{cone}$ satisfies Dirichlet boundary conditions\footnote{The Neumann problem is harder precisely because such a parametrix would not necessarily inherit Neumann boundary conditions. This can likely be worked around but we do not do so here.}, inheriting this from
$H_{cone}$.

Now examine $G$ at ff, the corner front face. This face is the most singular stratum of the heat space, where $t \to 0$ and both points $x,y=0$. The function $G$ is equal to 1 at ff because $T=0$ at ff, and $F=1$ at ff. As a consequence, $H^{(1)}$ is in the kernel of the heat operator $\mathcal L$ to leading order at ff; we say that it ``solves the model problem'' to one order at ff. It also solves the model problem to two orders at td. And since $T=0$ at sf and $\exp[-\frac 14(F(y)-1)X^2]H_{cone}$ agrees to first order at sf with the initial parametrix $\exp[-\frac 14(F(y)-1)X^2]H_{\mathbb R^2_+}$ used by Grieser at sf \cite{Grieser}, $H^{(1)}$ also solves the model problem to one order at sf.

Now define:
\[R^{(1)} := \mathcal L(H^{(1)}).\]
If $H^{(1)}$ were not correctly chosen, we would expect $R^{(1)}$ to be two orders worse than $H^{(1)}$ at each of td, ff, and sf, which means order $-4$ at each. However,
since $H^{(1)}$ solves the model problem to two orders at td and one order at ff, and also one order at sf, $R^{(1)}$ has leading orders given by $-2$ at td, $-3$ at ff, and $-3$ at sf. 

In order to use the composition theorem of \cite[Theorem 3.16]{NRS}, which requires one of the two operators to vanish to infinite order at td, we work with a slightly improved parametrix $H^{(2)}$ given by solving away the full expansion at td. This equals $H^{(1)}$ plus a term which is order \emph{zero} at td, ff, and sf.   
Let $D:=H^{(2)}-H^{(1)}$. Then $D$ has order $\ge 0$ at ff. Hence if we use the notation
\[R^{(2)}:=\mathcal L(H^{(2)}),\]
then 
$R^{(2)}-R^{(1)}=\mathcal L(D)$ has order $\ge -2$ at ff and sf.
Moreover, $R^{(2)}$ vanishes to infinite order at td and it has leading order $-3$ at both ff and sf.

By Duhamel's principle and our usual Neumann series sum, the true heat kernel for our conformal metric is
\[H=H^{(2)} - H^{(2)}*R^{(2)}+\sum_{k=2}^{\infty}(-1)^kH^{(2)}*(R^{(2)})^{k}.\]
We are interested in finding its term of order $-1$ at ff (the sub-leading term). By the composition formula in \cite{NRS}, using in particular that
\[\text{ord}_{\text{ff}}(A*B) = \text{ord}_{\text{ff}}(A) + \text{ord}_{\text{ff}}(B) + 4,\]
the terms with $k\ge 2$ have no order $-1$ term at ff, so the order $-1$ term of our true heat kernel at ff is the same as that of
\[H^{(2)}-H^{(2)}*R^{(2)}.\]
This in turn\footnote{The justification for using the simpler parametrix $H^{(1)}$ to compute the subleading term at the {ff} face is done by an analysis of operator orders within the composition calculus of \cite{NRS}. Let $D = H^{(2)} - H^{(1)}$ be the correction term that improves the parametrix at the {td} face. By construction, the leading order of $D$ at the {ff} face is at least $T^0$. The difference between the first two terms of the respective Neumann series is $\Delta H = D - H^{(1)}*\mathcal{L}(D) - D*R^{(1)} - D*\mathcal{L}(D)$. A direct application of the composition formula shows that each term in this expression has a leading order of $T^0$ or higher at the {ff} face. For instance, the most singular term, $H^{(1)}*\mathcal{L}(D)$, has an order of at least $-2 + (-2) + 4 = 0$. Since the entire difference $\Delta H$ is no more singular than $T^0$, the expressions $H^{(1)}-H^{(1)}*R^{(1)}$ and $H^{(2)}-H^{(2)}*R^{(2)}$ must agree for all terms of order $T^{-1}$ and below. We are therefore justified in using the simpler expression to compute the desired subleading coefficient.} is the same as that of
\[H^{(1)}-H^{(1)}*R^{(1)}=GH_{cone}-GH_{cone}*(\mathcal L(GH_{cone})).\]
Here $H^{(1)}$ has order $-2$ at ff, so we need its leading and subleading terms. $R^{(1)}$ has leading ff order $-3$. Thus the leading ff term of the convolution $H^{(1)}*R^{(1)}$ comes from $T^{-2} \ast T^{-3}$, which lands at $-1$ at ff, by the ff composition rule.

A critical fact here is that by the composition formula in \cite{NRS}, the leading term of $H^{(1)}*R^{(1)}$ at ff depends only on the leading terms of $H^{(1)}$ and $R^{(1)}$ at ff, not on their leading terms at other boundary faces (in particular at sf). This is helpful for the following reasons. 
\begin{itemize}
    \item First, the integral in the convolution $H^{(1)}*R^{(1)}$ is done with respect to the metric $Fg_{Eucl}$, but since we are computing at the ff face, which corresponds to the vertex where $r=0$, we can evaluate $F(z)$ at that point. As established, $F(0)=1$. So, for the purpose of calculating the leading term of the convolution, the integration measure simplifies from the Riemannian volume form to the standard Euclidean one.
    \item Second, when computing the second convolution factor $\mathcal L(GH_{cone})$, we can ignore any term below its leading order $-3$. Similarly, for the first convolution factor, $GH_{\text{cone}}$, we only need its leading order term, which is of order $T^{-2}$. All smoother terms can be ignored as they will not affect the leading term of the result.
\end{itemize} 

This leads to the following simplification. Consider $GH_{cone}*(\mathcal L(GH_{cone}))$. We only need to worry about the leading order of each term, so $G$ may be replaced with 1 and we need only analyze
\[H_{cone}*(\mathcal L(GH_{cone})).\]
Let us compute $\mathcal L(GH_{cone})$. It is
\begin{equation}
    \mathcal L(GH_{cone})=\mathcal L(G)H_{cone} + G\mathcal L(H_{cone}) - 2F^{-1}\nabla G\cdot\nabla(H_{cone}).
\end{equation}
Using the fact that $\mathcal L_{cone}H_{cone}=0,$ this becomes
\begin{equation}\mathcal L(GH_{cone})=\mathcal L(G)H_{cone} - G(F^{-1}-1) \Delta(H_{cone}) - 2F^{-1}\nabla G\cdot\nabla(H_{cone}).\end{equation}
We may replace the $\mathcal L$ with $\mathcal L_{cone}$ in the first term, as $(\mathcal L-\mathcal L_{cone})(G)$ has order zero at ff. We may also drop the $G$ in the second term and the $F^{-1}$ in the third term, as they are both 1 to leading order at ff. Thus up to terms which are irrelevant for our calculation,
\[\mathcal L(GH_{cone})\sim \mathcal L_{cone}(G)H_{cone}-(F^{-1}-1)\Delta(H_{cone})-2\nabla G\cdot\nabla(H_{cone}).\]
In conclusion, to two orders at ff, we have
\begin{equation}\label{eq:realhkapproxatff}
H\sim GH_{cone} - H_{cone}*(\mathcal L_{cone}(G)H_{cone} -(F^{-1}-1)\Delta(H_{cone})- 2\nabla G\cdot\nabla H_{cone}),
 \end{equation}
where the convolution -- and all derivatives here -- are taken with respect to the Euclidean metric on the cone. The geometric correction factor $G$ must be retained in the stand-alone factor $G\,H_{\mathrm{cone}}$ (before taking the trace): its $T^{+1}$ contribution times the $T^{-2}$ of $H_{\mathrm{cone}}$ produces one of the two $T^{-1}$ pieces in the kernel at ff we are calculating.

\subsection{Computations with $G$}

We now turn to the explicit calculation of the terms in Equation \eqref{eq:realhkapproxatff}. We are interested in the first two terms in their expansion at ff.

Throughout, we replace Grieser's $x$ and $y$ by $z$ and $z'$. Let $z$ have polar coordinates $(r,\theta)$ and let $z'$ have polar coordinates $(r',\theta')$, and then let 
\[R=r/T, R'=r'/T.\]
With this notation, natural local coordinates near the interior of the front face ff are $(T,R,\theta,R',\theta')$, where $T$ is a boundary defining function. In these coordinates, we compute the first two terms in the expansion \eqref{eq:realhkapproxatff}, of orders $T^{-2}$ and $T^{-1}$. 
To do this we compute two terms in the expansion of $GH_{cone}$. We also compute the leading $T^{-2}$ term in the expansion of $H_{cone}$ and the leading $T^{-3}$ term in the expansion of the second convolution factor, as by the composition formula in \cite{NRS} their convolution will give a term of order $T^{-2-3+4}=T^{-1}$ at ff. To this end, we denote
\begin{equation}
    a(R,\theta,R',\theta')=T^2H_{cone}|_{\mathrm{ff}};
    \end{equation}
    \begin{equation}\label{eq:binitial}b(R,\theta,R',\theta')= r_0^{-1}(T^3(\mathcal L_{cone}(G)H_{cone} -(F^{-1}-1)\Delta(H_{cone})- 2\nabla G\cdot\nabla H_{cone}))|_{\mathrm{ff}}.
\end{equation}
Our goal is to find expressions for $a$ and $b$.

Let $\varphi_j(\theta)$ be the $L^2$-normalized eigenfunction
\[\varphi_j(\theta)=\sqrt{\frac{2}{\alpha}}\sin(\frac{\pi j\theta}{\alpha}).\]
First, we use Cheeger's formula for the Dirichlet heat kernel on the exact sector of angle $\alpha$ \cite{Cheeger1983}.
In polar coordinates $z=(r,\theta)$ and $z'=(r',\theta')$, one has
\[
H_{\mathrm{cone}}(t; r,\theta,r',\theta')
=\frac{1}{2t}\exp\Big[-\frac{1}{4t}\big(r^{2}+(r')^{2}\big)\Big]
\sum_{j=1}^{\infty} I_{\mu_j}\Big(\frac{r r'}{2t}\Big)\,\varphi_j(\theta)\,\varphi_j(\theta').
\]
Now set $T=\sqrt t$, $R=r/T$, and $R'=r'/T$. Since $r=TR$, $r'=TR'$, and $t=T^2$, this becomes
$H_{\mathrm{cone}}=T^{-2}a(R,\theta,R',\theta')$ by a simple scaling argument, with $a$ given by
\begin{equation}\label{eq:a-def}
    a(R,\theta,R',\theta')
    =\frac12\,\exp \Big[-\frac{R^2+(R')^2}{4}\Big]
    \sum_{j=1}^\infty I_{\mu_j} \Big(\frac{RR'}{2}\Big)\,
    \varphi_j(\theta)\,\varphi_j(\theta').
\end{equation}
To be absolutely clear, we write
\[
H_{\mathrm{cone}}(t; r,\theta,r',\theta')
= T^{-2}\,
a \left(\frac{r}{\sqrt t},\theta,\frac{r'}{\sqrt t},\theta'\right),
\qquad T=\sqrt t,
\]

For all other terms we must analyze $G$.
Recall that $G$ is a function of $T=\sqrt{t}$, the source point $z'$, and the scaled, relative position vector $X = (z-z')/T$. We have the relations
\begin{align*}
    z &= (r\cos\theta, r\sin\theta) = (RT\cos\theta, RT\sin\theta) \\
    z' &= (r'\cos\theta', r'\sin\theta') = (R'T\cos\theta', R'T\sin\theta').
\end{align*}
We can directly compute the components of the scaled vector $X$:
\begin{equation}
    X = (R\cos\theta - R'\cos\theta', R\sin\theta - R'\sin\theta').
\end{equation}
Define the distance function $D=D(R,\theta,R',\theta')$ by
\begin{equation}
    D^2:=|X|^2 = (R\cos\theta - R'\cos\theta')^2 + (R\sin\theta - R'\sin\theta')^2 = R^2 + (R')^2 - 2RR'\cos(\theta-\theta').
\end{equation}
For future use we also define
\begin{equation}\label{eq:S-def}
S=S(R,\theta,R',\theta')= \left( R\cos(\theta+\theta_0) + R'\cos(\theta'+\theta_0) \right).
\end{equation}
With these relations, we can write the full expression for $G$ in the front-face coordinates:
\begin{multline}
    G(T,R,\theta,R',\theta')=\Bigg[1-\frac 18TD^2\langle R\cos\theta-R'\cos\theta',R\sin\theta-R'\sin\theta'\rangle\cdot\nabla F(R'T,\theta')\Bigg]\\\exp[-\frac 14(F(R'T,\theta')-1)D^2].   
\end{multline}

Now we compute the expansion of $G$ at ff in powers of $T$. Since $F$ is smooth, we have by Taylor series that
\begin{equation*}
	G \approx 1 - \frac{1}{8}TD^2 \langle R\cos\theta - R'\cos\theta', R\sin\theta - R'\sin\theta' \rangle \cdot \nabla F(0,0) - \frac{1}{4}TD^2 \langle R'\cos\theta', R'\sin\theta' \rangle \cdot \nabla F(0,0) + O(T^2).
\end{equation*}
By combining the vector coefficients and plugging in $\nabla F(0,0)=4r_0\langle\cos\theta_0,-\sin\theta_0\rangle$, this expression simplifies:
\begin{align}\label{eq:gsimplifying}
	G &= 1 - TD^2 \left[ \frac{1}{8}\langle R\cos\theta + R'\cos\theta', R\sin\theta + R'\sin\theta' \rangle \right] \cdot \left( 4r_0\langle\cos\theta_0,-\sin\theta_0\rangle \right) + O(T^2) \\
	&= 1 - \frac{r_0}{2} TD^2 \langle R\cos\theta + R'\cos\theta', R\sin\theta + R'\sin\theta' \rangle \cdot \langle\cos\theta_0,-\sin\theta_0\rangle + O(T^2)\\
    &= 1 - \frac{r_0}{2} TD^2 \left( R(\cos\theta\cos\theta_0-\sin\theta\sin\theta_0) + R'(\cos\theta'\cos\theta_0-\sin\theta'\sin\theta_0) \right) + O(T^2) \\
	&= 1 - \frac{r_0}{2} TD^2S + O(T^2).
\end{align}
From this we immediately conclude that, at ff,
\begin{equation}\label{eq:ghconeatff}
    GH_{cone} = T^{-2}a(R,\theta,R',\theta') - \frac {r_0}{2}T^{-1}D^2Sa(R,\theta,R',\theta') + O(1).
\end{equation}
One nice feature of this expression is that, since $D^2$ is zero on the spatial diagonal, the sub-leading term of $GH_{cone}$ gives no contribution to the heat trace.

The last step is to find an expression for $b(R,\theta,R',\theta')$ by simplifying \eqref{eq:binitial}. As a first step, we examine the middle term in \eqref{eq:binitial}. We have
\[F^{-1}(r,\theta)=1-4r_0r\cos(\theta+\theta_0)+O(r^2),\]
so in our front face coordinates,
\[(F^{-1}-1)=-4r_0TR\cos(\theta+\theta_0)+O(T^2).\]
We use the notation
\[\Delta_{R,\theta} = \partial_R^2 + \frac{1}{R}\partial_R + \frac{1}{R^2}\partial_\theta^2, \qquad \nabla_{R,\theta} = (\partial_R, \frac{1}{R}\partial_\theta).\] 
With this notation, we have $\nabla_{r,\theta} = T^{-1} \nabla_{R,\theta}$ and $\Delta_{r,\theta} = T^{-2} \Delta_{R,\theta}$, and so
\[\Delta H_{cone}=T^{-2}\Delta_{R,\theta}H_{cone},\] which by conic scaling is \[T^{-4}\Delta_{R,\theta}a(R,\theta,R',\theta').\]
Therefore
\[-(F^{-1}-1)\Delta H_{cone} = 4r_0T^{-3}R\cos(\theta+\theta_0)\Delta_{R,\theta}a(R,\theta,R',\theta'),\]
and $T^3r_0^{-1}$ times this is the contribution that is made to $b(R,\theta,R',\theta')$:
\begin{equation}
    -T^3r_0^{-1}(F^{-1}-1)\Delta H_{cone}=4R\cos(\theta+\theta_0)\Delta_{R,\theta}a.
\end{equation}

The other terms in \eqref{eq:binitial} involve derivatives of $G$, so we must compute these using \eqref{eq:gsimplifying}. At ff, 
\[\nabla G=-\frac{r_0}{2}\nabla_{R,\theta}(D^2S) + O(T);\quad \Delta G = -\frac{r_0}{2}T^{-1}\Delta_{R,\theta}(D^2S) + O(1).\]
On the other hand, $\partial_t$ lifts to
\[\frac 12T^{-2}(T\partial_T-R\partial_R-R'\partial_{R'})\]
and so
\begin{equation}\partial_tG =-\frac{r_0}{4}T^{-1}D^2S+\frac{r_0}{4}T^{-1} (R\partial_R+R'\partial_R')(D^2S)+ O(1).\end{equation}
All in all,
\begin{equation}\mathcal LG = -\frac{r_0}{4}T^{-1}(1-R\partial_{R}-R'\partial_{R'}-2\Delta_{R,\theta})(D^2S) + O(1).\end{equation}
\begin{lemma}\label{lem:polarops}
With $\Delta_{R,\theta}$ and $\nabla_{R,\theta}$ the polar Laplacian/gradient in $(R,\theta)$,
\begin{equation*}
\Delta_{R,\theta}(D^2)=4,\qquad \Delta_{R,\theta}S=0,\qquad \Delta_{R,\theta}(D^2 S)=8R\cos(\theta+\theta_0),\qquad
(R\partial_R+R'\partial_{R'})(D^2 S)=3D^2 S.
\end{equation*}
\end{lemma}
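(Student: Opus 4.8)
The plan is to reduce all four identities to the single elementary observation that $\Delta_{R,\theta}$ is nothing but the flat Euclidean Laplacian written in polar coordinates on the plane. Concretely, set $x=R\cos\theta$, $y=R\sin\theta$, and likewise $x'=R'\cos\theta'$, $y'=R'\sin\theta'$, where $R',\theta',\theta_0$ are treated as fixed parameters. Then $\Delta_{R,\theta}=\partial_x^2+\partial_y^2$ and the components $\nabla_{R,\theta}=(\partial_R,\tfrac1R\partial_\theta)$ are exactly the components of the Euclidean gradient in the orthonormal polar frame, so that $\nabla_{R,\theta}f\cdot\nabla_{R,\theta}g$ equals the Euclidean inner product $\nabla_{x,y}f\cdot\nabla_{x,y}g$. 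In these coordinates $D^2=(x-x')^2+(y-y')^2$ is a sum of two one-variable quadratics, giving $\Delta_{R,\theta}(D^2)=2+2=4$; and $S=R\cos(\theta+\theta_0)+R'\cos(\theta'+\theta_0)=(x\cos\theta_0-y\sin\theta_0)+\text{const}$ is affine, hence harmonic, in $(x,y)$, so $\Delta_{R,\theta}S=0$. These are the first two identities.

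For the third identity I would apply the product rule $\Delta(fg)=(\Delta f)g+2\nabla f\cdot\nabla g+f\,\Delta g$ with $f=D^2$, $g=S$. By the identities just established the first and third terms equal $4S$ and $0$, so it remains to evaluate $2\,\nabla_{R,\theta}(D^2)\cdot\nabla_{R,\theta}S$. In Cartesian coordinates $\nabla(D^2)=2(x-x',\,y-y')$ and $\nabla S=(\cos\theta_0,-\sin\theta_0)$, whose dot product is $2\big[(x\cos\theta_0-y\sin\theta_0)-(x'\cos\theta_0-y'\sin\theta_0)\big]=2\big[R\cos(\theta+\theta_0)-R'\cos(\theta'+\theta_0)\big]$. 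Adding $4S=4R\cos(\theta+\theta_0)+4R'\cos(\theta'+\theta_0)$ to twice this cross term, the $R'\cos(\theta'+\theta_0)$ contributions cancel and one is left with exactly $8R\cos(\theta+\theta_0)$.

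The last identity is purely a homogeneity statement and needs no differentiation of the trigonometric factors at all: with $\theta,\theta',\theta_0$ held fixed, $D^2$ is homogeneous of degree $2$ jointly in $(R,R')$ and $S$ is homogeneous of degree $1$, so $D^2S$ is homogeneous of degree $3$; Euler's identity for homogeneous functions then yields $(R\partial_R+R'\partial_{R'})(D^2S)=3D^2S$. None of these steps presents a genuine obstacle; the only place that calls for care is the sign bookkeeping in the cross term of the third identity, where one must keep track that the $\nabla(D^2)$ contribution subtracts, rather than adds, the $R'\cos(\theta'+\theta_0)$ piece, so that it cancels against the corresponding term in $4S$ rather than doubling it. As a cross-check one may instead verify the first and third identities by differentiating $D^2=R^2-2RR'\cos(\theta-\theta')+(R')^2$ directly in $(R,\theta)$, which reproduces the same answers.
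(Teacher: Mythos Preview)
Your proof is correct and is essentially the same ``direct computation'' the paper sketches: both rely on $\Delta_{R,\theta}D^2=4$ and the harmonicity of $S$, and you have simply made these transparent by passing to Cartesian coordinates. Your use of Euler's homogeneity identity for the last equation is a clean touch that the paper does not spell out but which is entirely in the spirit of the one-line proof given there.
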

\begin{proof}
    This is a direct computation, using for instance that $\Delta_{R,\theta}R^2=4$ and $S$ is harmonic in $(R,\theta)$.
\end{proof}
Putting all of this together, we obtain
\begin{multline}\label{eq:bexpression}
b(R,\theta,R',\theta')=\left(\frac 12D^2S+4R\cos(\theta+\theta_0)\right)a(R,\theta,R',\theta')\\+4R\cos(\theta+\theta_0)\Delta_{R,\theta}a(R,\theta,R',\theta')\\ +\nabla_{R,\theta}(D^2S)\cdot\nabla_{R,\theta}a(R,\theta,R',\theta').
\end{multline}

For later use, we rewrite the gradient term by using the identity
\begin{equation}\label{eq:gradidentity}
    \nabla u\cdot\nabla v = \frac 12\Delta(uv) - \frac 12v\Delta u -\frac 12u\Delta v.
\end{equation}
This yields, after some computations,

\begin{equation}\label{eq:bexpression-clean}
    \begin{aligned}
    b(R,\theta,R',\theta')&=
    \Big(\tfrac12 D^2S\Big)\,a(R,\theta,R',\theta')\\
    &\quad+\,4 R\cos(\theta+\theta_0)\,\Delta_{R,\theta}a(R,\theta,R',\theta')\\
    &\quad+\,\tfrac{1}{2}\,\Delta_{R,\theta}\big(D^2S\cdot a(R,\theta,R',\theta')\big)
    -\tfrac{1}{2}\,(D^2S)\,\Delta_{R,\theta}a(R,\theta,R',\theta').
    \end{aligned}
    \end{equation}
Either \eqref{eq:bexpression} or \eqref{eq:bexpression-clean} may be used.

\subsubsection{The little trace and the convolution}
To find the heat trace, we first restrict our kernel to the diagonal to find the so-called ``little trace". By \eqref{eq:ghconeatff}, the restriction to the diagonal of the $T^{-1}$ coefficient of $GH_{cone}$ is zero, and therefore the first term in \eqref{eq:realhkapproxatff} does not contribute to the $T^{-1}$ term of interest. The only contribution comes from the second term in \eqref{eq:realhkapproxatff}, and that contribution is the convolution of the leading orders of the two factors. This convolution is
\[-\int_0^t\int_0^{\infty}\int_0^{\alpha} (t-s)^{-1}a(\frac{r}{\sqrt{t-s}},\theta,\frac{r'}{\sqrt{t-s}},\theta')s^{-3/2}b(\frac{r'}{\sqrt{s}},\theta',\frac{r''}{\sqrt{s}},\theta'')\, r'\, d\theta'\, dr'\, ds.\]
Switching to front face variables yields that the term of interest, $H_{\text{sub}}$, is the following: here $\cong$ denotes having the same heat trace coefficient at order $T^{-1}$,
\begin{multline}
    H_{\text{sub}} \cong -T^{-1} \int_0^1 \int_0^{\infty} \int_0^{\alpha} (1-\sigma)^{-1}\sigma^{-3/2} \\
    \times a\left(\frac{R}{\sqrt{1-\sigma}},\theta,\frac{R'}{\sqrt{1-\sigma}},\theta'\right) b\left(\frac{R'}{\sqrt{\sigma}},\theta',\frac{R''}{\sqrt{\sigma}},\theta''\right) \, R'\, d\theta'\, dR'\, d\sigma.
\end{multline} 
Setting $(R'',\theta'')=(R,\theta)$ gives the little trace. If you also make the change of variables $\tilde R'=R'/\sqrt\sigma$, then $R' dR' = \sigma \tilde R' d \tilde R'$. Then
\begin{multline}\label{eq:horriblething}
    (\textrm{tr }H_{\text{sub}})(R,\theta) = -T^{-1} \int_0^1 \int_0^{\infty} \int_0^{\alpha} (1-\sigma)^{-1}\sigma^{-1/2} \\
    \times a\left(\frac{R}{\sqrt{1-\sigma}},\theta,\frac{\sqrt\sigma\tilde R'}{\sqrt{1-\sigma}},\theta'\right) b\left(\tilde R',\theta',\frac{R}{\sqrt\sigma},\theta\right) \, \tilde R'\, d\theta'\, d\tilde R'\, d\sigma.
\end{multline} 

To find the contribution to the heat trace, we first restrict this kernel to the diagonal by setting $(R',\theta')=(R,\theta)$, then we integrate with respect to $r\, dr\, d\theta = T^2 R \, dR\, d\theta$. 

The last thing to do is to address the dependence of this expression on $\kappa_{\pm}$. Let
\[
I_\alpha(\theta_0)
\]
denote the renormalized integral obtained from the right-hand side of \eqref{eq:general} by replacing
$b_{\mathrm{bis}}$ with the general function $b$ from \eqref{eq:bexpression-clean}. Then the corner
contribution produced by the conformal model is
\[
\mathcal C_{1/2}(\alpha,\kappa_+,\kappa_-)=r_0\,I_\alpha(\theta_0).
\]
By \eqref{eq:bexpression-clean}, every occurrence of $\theta_0$ is linear and enters only through shifted
angles such as $\theta+\theta_0$ and $\phi+\theta_0$. Expanding these shifted trigonometric functions shows
that
\[
I_\alpha(\theta_0)=A(\alpha)\cos\theta_0+B(\alpha)\sin\theta_0
\]
for some coefficients $A(\alpha)$ and $B(\alpha)$.

Now reflect the sector across its bisector. This swaps the two boundary rays, hence swaps
$\kappa_+$ and $\kappa_-$, but it does not change the heat trace coefficient because the reflected corner is
isometric to the original one. In terms of the parameter $\theta_0$, the reflection sends
\[
\theta_0\mapsto -\alpha-\theta_0,
\]
which is the transformation that exchanges the two formulas in \eqref{eq:kappas-via-theta0}. Therefore
\[
I_\alpha(\theta_0)=I_\alpha(-\alpha-\theta_0).
\]
Combining this symmetry with the linear form above gives
\[
I_\alpha(\theta_0)=C(\alpha)\cos\left(\theta_0+\frac{\alpha}{2}\right)
\]
for some function $C(\alpha)$. We now define
\[
c_{1/2}(\alpha):=I_\alpha\left(\pi-\frac{\alpha}{2}\right),
\]
which is exactly the renormalized trace formula \eqref{eq:general}. Since
\[
I_\alpha\left(\pi-\frac{\alpha}{2}\right)=-C(\alpha),
\]
we obtain
\[
I_\alpha(\theta_0)=-c_{1/2}(\alpha)\cos\left(\theta_0+\frac{\alpha}{2}\right).
\]
Finally, substituting \eqref{eq:bisector-amplitude} gives
\[
\mathcal C_{1/2}(\alpha,\kappa_+,\kappa_-)
=r_0 I_\alpha(\theta_0)
=c_{1/2}(\alpha)\,\frac{\kappa_+ + \kappa_-}{4\sin(\alpha/2)},
\]
which is \eqref{eq:form} in the case $\alpha\ne\pi$ (the case $\alpha=\pi$ is dealt with separately in section~\ref{ss:alpha_equals_pi}).

We give an explicit expression for $c_{1/2}(\alpha)$ obtained from \eqref{eq:horriblething}. By Melrose's pushforward theorem (see \cite{MelroseAPS}; see also \cite[Theorem 3.10]{GrieserBcalc} for a precise statement), the trace contribution \eqref{eq:horriblething} is then given by a \emph{renormalized} integral, which, for explicitness, we record as a proposition.

\begin{proposition}[Finite-part formula for $c_{1/2}(\alpha)$]
\label{prop:finite-part-c}
Assume $0<\alpha<2\pi$ and $\alpha\ne\pi$. Let $a$ be the sector kernel coefficient in \eqref{eq:a-def}, and let $b_{\mathrm{bis}}$ be the function in \eqref{eq:bexpression-clean} with
\[
\theta_0=\pi-\frac{\alpha}{2}.
\]
Then
\begin{equation}\label{eq:general}
\begin{aligned}
c_{1/2}(\alpha)
&= -\operatorname*{fp}_{A\to\infty}
\int_0^A \int_0^\alpha \int_0^1 \int_0^\infty \int_0^\alpha
(1-\sigma)^{-1}\sigma^{-1/2} \\
&\quad \times
a\left(\tfrac{R}{\sqrt{1-\sigma}},\theta,\tfrac{\sqrt{\sigma}\rho}{\sqrt{1-\sigma}},\varphi\right)
b_{\mathrm{bis}}\left(\rho,\varphi,\tfrac{R}{\sqrt{\sigma}},\theta\right)
\rho\,d\varphi\,d\rho\,d\sigma\,R\,d\theta\,dR.
\end{aligned}
\end{equation}
Here $\operatorname*{fp}_{A\to\infty}$ denotes the Hadamard finite part of the truncated integral.
\end{proposition}

The finite-part formula is separate from the sign argument in Section 5, where $c_{1/2}(\alpha)$ is represented instead by a domain variation and the heat Poisson kernel.

\section{The case $\alpha=\pi/2$}\label{sec:specialpi2}

\subsection{The Dirichlet case}
Our goal here is to compute through the $t^{1/2}$ term in the short-time expansion of the heat trace of the Laplacian on a domain $\Omega$ which is a half-disk of radius 1, with Dirichlet boundary conditions. This provides a concrete test case in which the corner term can be read off from an explicit spectrum and compared directly with \eqref{eq:heattrace} and \eqref{eq:form}. This heat trace is denoted
\begin{equation}\Tr e^{-t\Delta_{\Omega}}.\end{equation}
The spectrum of the Dirichlet Laplacian on $\Omega$ consists of $\{j_{m,k}^2\}_{m,k\in\mathbb N}$, each with multiplicity one. This may be compared to the spectrum of the Dirichlet Laplacian on a disk $D$, which consists of all of these eigenvalues but with multiplicity two, as well as the set $\{j_{0,k}^2\}_{k\in\mathbb N}$, each with multiplicity one. As a result, if we let
\begin{equation}
H(t) := \sum_{k=1}^{\infty}e^{-tj_{0,k}^2},
\end{equation}
then
\begin{equation}\label{eq:relationship}
\Tr e^{-t\Delta_{\Omega}} = \frac 12 \Tr e^{-t\Delta_D} - \frac 12H(t).
\end{equation}
We will take advantage of this.

The heat trace $\Tr e^{-t\Delta_D}$ is well understood and has a full polyhomogeneous asymptotic expansion in half-integer powers as $t\to 0$. The beginning of this expansion is, from \cite{EGS_2017},
\begin{equation}
\Tr e^{-t\Delta_D} = \frac{1}{4}t^{-1} -\frac{\sqrt\pi}{4}t^{-1/2}+\frac 16 +\frac{\sqrt\pi}{128}t^{1/2}+O(t).
\end{equation}

For $H(t)$, we use the asymptotics of Bessel function zeroes due to McMahon \cite[(10.21.19)]{NIST:DLMF}:
\begin{equation}
j_{0,k}= (k-\frac 14)\pi + \frac{1}{8(k-\frac 14)\pi} + O(k^{-3}),
\end{equation}
which imply
\begin{equation}
j_{0,k}^{2} = (k-\frac 14)^2\pi^2 + \frac 14 + O(k^{-2}).
\end{equation}
Based on this we define the function
\begin{equation}
\tilde H(t) = \sum_{k=1}^{\infty} \exp[-t((k-\frac 14)^2\pi^2 + \frac 14)]
\end{equation}
and estimate its difference with $H(t)$.
\begin{lemma}\label{lem:difference} With all notation as above, as $t\to 0$,
\begin{equation}
H(t) - \tilde H(t) = O(t).
\end{equation}
\end{lemma}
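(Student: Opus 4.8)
The plan is to bound $H(t)-\tilde H(t)$ by the sum of the differences of the individual exponential terms, and then to show that this sum is $O(t)$ using the error estimate in the expansion $j_{0,k}^2 = (k-\tfrac14)^2\pi^2 + \tfrac14 + O(k^{-2})$. Write $\lambda_k = j_{0,k}^2$ and $\tilde\lambda_k = (k-\tfrac14)^2\pi^2 + \tfrac14$, so that $\lambda_k - \tilde\lambda_k = O(k^{-2})$, say $|\lambda_k - \tilde\lambda_k|\le C k^{-2}$ for all $k$. By the mean value theorem applied to $x\mapsto e^{-tx}$, we have
\[
\bigl|e^{-t\lambda_k} - e^{-t\tilde\lambda_k}\bigr| \le t\,|\lambda_k - \tilde\lambda_k|\,e^{-t\min(\lambda_k,\tilde\lambda_k)} \le C t\,k^{-2}\,e^{-c t k^2}
\]
for a suitable constant $c>0$ (using that both $\lambda_k$ and $\tilde\lambda_k$ are bounded below by $c k^2$ for $k$ large, and handling the finitely many small $k$ separately — their contribution to $H-\tilde H$ is trivially $O(t)$ by the same mean value estimate). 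Summing over $k$ gives
\[
|H(t)-\tilde H(t)| \le Ct\sum_{k=1}^{\infty} k^{-2} e^{-ctk^2} + O(t) \le Ct\sum_{k=1}^{\infty}k^{-2} + O(t) = O(t),
\]
since $\sum k^{-2}$ converges and $e^{-ctk^2}\le 1$. This already gives the claimed bound, in fact without needing any decay from the exponential factor.

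In carrying this out I would be slightly careful about two points. First, the $O(k^{-2})$ in $\lambda_k - \tilde\lambda_k$ comes from squaring the McMahon expansion $j_{0,k} = (k-\tfrac14)\pi + \tfrac{1}{8(k-\tfrac14)\pi} + O(k^{-3})$: the cross term $2\cdot(k-\tfrac14)\pi\cdot O(k^{-3})$ is $O(k^{-2})$, the square of the $\tfrac{1}{8(k-\tfrac14)\pi}$ term is $O(k^{-2})$, and the leading correction $2\cdot(k-\tfrac14)\pi\cdot\tfrac{1}{8(k-\tfrac14)\pi} = \tfrac14$ is exactly the constant absorbed into $\tilde\lambda_k$, so indeed $\lambda_k - \tilde\lambda_k = O(k^{-2})$. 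Second, one should make sure the constant $C$ in $|\lambda_k - \tilde\lambda_k|\le Ck^{-2}$ is uniform over all $k\ge 1$, which is fine since the sequence is bounded and the asymptotic is valid for large $k$.

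There is essentially no main obstacle here: the estimate is elementary once one has the McMahon asymptotics, and the convergence of $\sum k^{-2}$ does all the work. The only mild subtlety — hardly an obstacle — is to phrase the argument so that it is manifestly uniform in $t$ as $t\to 0$, i.e. that the implied constant in $O(t)$ does not secretly depend on $t$; this is immediate from the bound $|H(t)-\tilde H(t)|\le Ct\sum_k k^{-2}$ displayed above.
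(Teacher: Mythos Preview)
Your proof is correct and takes essentially the same approach as the paper: bound each term $|e^{-t\lambda_k}-e^{-t\tilde\lambda_k}|$ by $Ctk^{-2}$ (the paper writes this as $e^{-t\tilde\lambda_k}|1-e^{-tO(k^{-2})}|\le \tilde Ctk^{-2}$, your mean value theorem version is equivalent), then sum using convergence of $\sum k^{-2}$. The paper's write-up is slightly terser but the argument is the same.
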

This is helpful because we also have the following lemma.
\begin{lemma}\label{lem:compute} As $t\to 0$,
\begin{equation}
\tilde H(t) = \frac{1}{2\sqrt\pi}t^{-1/2} + c - \frac{1}{8\sqrt\pi}t^{1/2}+ O(t).
\end{equation}
Here $c$ is a constant which is irrelevant to our purposes.
\end{lemma}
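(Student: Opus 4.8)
The plan is to obtain the asymptotics of $\tilde H(t) = \sum_{k=1}^\infty \exp[-t((k-\tfrac14)^2\pi^2 + \tfrac14)]$ by splitting off the constant factor $e^{-t/4}$ and analyzing the remaining shifted theta-type sum $\sum_{k\ge 1} e^{-t\pi^2(k-1/4)^2}$. Writing $e^{-t/4} = 1 - \tfrac{t}{4} + O(t^2)$, and noting that the theta sum is $O(t^{-1/2})$, the product contributes $-\tfrac14 t \cdot O(t^{-1/2}) = O(t^{1/2})$ at the relevant order—so in fact I must keep one more term: $e^{-t/4}\left(\Theta(t)\right) = \Theta(t) - \tfrac{t}{4}\Theta(t) + O(t^2 \cdot t^{-1/2})$, where $\Theta(t) := \sum_{k\ge1} e^{-t\pi^2(k-1/4)^2}$. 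The leading behavior of $\Theta$ is $\tfrac{1}{2\sqrt\pi} t^{-1/2}$ (half of a full Gaussian sum, by the integral/Euler–Maclaurin or Poisson comparison, since the shift $-1/4$ does not affect the leading term), so the correction $-\tfrac{t}{4}\Theta(t)$ is $-\tfrac{1}{8\sqrt\pi} t^{1/2} + O(t^{3/2})$, which is exactly the displayed $t^{1/2}$ coefficient. It then remains to show that $\Theta(t)$ itself has an expansion $\tfrac{1}{2\sqrt\pi}t^{-1/2} + c' + O(t^N)$ for every $N$, i.e.\ that the subleading terms of $\Theta$ beyond the constant are rapidly decaying.

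The cleanest way to establish the expansion of $\Theta(t)$ is the Poisson summation / Jacobi theta transformation. Extend the sum: let $\Theta_{\mathbb Z}(t) = \sum_{k\in\mathbb Z} e^{-t\pi^2(k-1/4)^2}$; then $\Theta(t) = \tfrac12\big(\Theta_{\mathbb Z}(t) + S(t)\big)$ where $S(t)$ collects the asymmetry, but it is simpler to note that $\{k - 1/4 : k\ge 1\} = \{m + 3/4 : m \ge 0\}$, and pair with the reflected set; in any case one reduces to a full bilateral sum over a shifted lattice plus finitely many explicitly-controlled boundary terms. Applying the transformation formula $\sum_{k\in\mathbb Z} e^{-t\pi^2(k+a)^2} = \tfrac{1}{\pi\sqrt t}\sum_{n\in\mathbb Z} e^{-n^2/t} e^{2\pi i n a}$, the right-hand side is $\tfrac{1}{\pi\sqrt t}\big(1 + O(e^{-1/t})\big)$—wait, that gives $\tfrac{1}{\pi\sqrt t}$, and since the lattice spacing here is $1$ (variable $k$) we get $\Theta_{\mathbb Z}(t) = \tfrac{1}{\sqrt{\pi}\,\sqrt{t}\cdot\sqrt\pi}$... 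I will need to track the normalization carefully: with $\sum_{k\in\mathbb Z} e^{-\pi t(k+a)^2} = t^{-1/2}\sum_n e^{-\pi n^2/t + 2\pi i na}$ and matching $\pi t \leftrightarrow t\pi^2$ so $t \leftrightarrow \pi t$, one gets $\Theta_{\mathbb Z}(t) = (\pi t)^{-1/2}\big(1 + O(e^{-c/t})\big)$, hence leading term $\tfrac{1}{\sqrt{\pi t}}$; halving and accounting for the reflected half of the lattice (which by the shift is \emph{not} the same set, so I must instead directly handle $\sum_{k\ge1}$ via, e.g., comparison of $\sum_{k\ge1} f(k)$ with $\int_0^\infty f$ plus $-f(0)/2$ via Euler–Maclaurin, all remainders exponentially small because $f$ and all derivatives vanish rapidly). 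The upshot either way: $\Theta(t) = \tfrac{1}{2\sqrt{\pi t}} + c' + O(t^\infty)$.

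The main obstacle—really the only delicate point—is bookkeeping the constant term and the \emph{absence} of intermediate terms between $t^{-1/2}$ and $t^0$, and between $t^0$ and $t^{1/2}$, in $\Theta(t)$. Poisson summation makes this transparent: the dual sum contributes only $t^{-1/2}$ times an entire function of $1/t$ all of whose further terms are exponentially small, so the \emph{only} polynomial-in-$t$ contributions to $\Theta$ come from a single constant (arising from the half-lattice correction / the $a = 3/4$ phase bookkeeping), with no $t^{1/2}$, $t^{3/2}$ terms at all. Feeding this into $e^{-t/4}\Theta(t) = \Theta(t) - \tfrac{t}{4}\Theta(t) + \tfrac{t^2}{32}\Theta(t) - \cdots$ and collecting powers: the $t^{-1/2}$ term is $\tfrac{1}{2\sqrt\pi}t^{-1/2}$; the $t^0$ term is $c'$ (call it $c$); the $t^{1/2}$ term is $-\tfrac14 \cdot \tfrac{1}{2\sqrt\pi} = -\tfrac{1}{8\sqrt\pi}$; and everything else is $O(t)$. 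This matches the claimed expansion, and since the constant $c$ plays no role downstream (only the $t^{-1/2}$, $t^0$, $t^{1/2}$ coefficients of $\Tr e^{-t\Delta_\Omega}$ matter, and in \eqref{eq:relationship} the relevant cancellations involve only those), I will simply record $c$ as an undetermined constant.
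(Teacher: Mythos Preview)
Your overall strategy---factor out $e^{-t/4}$, analyze the theta-type sum $\Theta(t)=\hat H(t)$, then multiply back---matches the paper's. Where you diverge is in the analysis of $\Theta(t)$: you use Poisson summation / Euler--Maclaurin, whereas the paper identifies the Mellin transform with a Hurwitz zeta function, $\hat\zeta(s)=\pi^{-2s}\zeta(2s,3/4)$, and reads off the expansion from its pole structure. Your route is more elementary (no analytic continuation); the paper's route makes the absence of half-integer terms beyond $t^{-1/2}$ immediate, since $\zeta(s,a)$ has only the single pole at $s=1$.

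There is, however, a real error in your argument. The claim $\Theta(t)=\tfrac{1}{2\sqrt{\pi t}}+c'+O(t^\infty)$ is false: $\Theta(t)$ has a full asymptotic series in \emph{integer} powers of $t$ beyond $t^{-1/2}$, with nonzero coefficients (for instance the $t^1$ coefficient equals $-\pi^{2}\zeta(-2,3/4)=-\pi^{2}/64$). Your justification, ``all remainders exponentially small because $f$ and all derivatives vanish rapidly,'' confuses behavior at infinity with behavior at the finite endpoint of the summation. The Euler--Maclaurin boundary terms involve $f^{(k)}(1)$, and $f(1)=e^{-9\pi^2 t/16}\to 1$ as $t\to 0$; these contribute an honest power series in $t$, not something exponentially small. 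Likewise, Poisson summation handles the bilateral sum $\Theta_{\mathbb Z}(t)$ to $O(e^{-1/t})$, but the passage from $\Theta_{\mathbb Z}$ to the one-sided $\Theta$ introduces genuine polynomial-in-$t$ corrections.

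Fortunately this does not damage the lemma: you only need $\Theta(t)=\tfrac{1}{2\sqrt{\pi t}}+c'+O(t)$, i.e.\ no $t^{1/2}$ term. This weaker statement \emph{does} follow from a careful Euler--Maclaurin argument. Writing $f(x)=g(\sqrt t\,(x-\tfrac14))$ with $g(u)=e^{-\pi^2 u^2}$, one has $f^{(2j-1)}(1)=t^{(2j-1)/2}g^{(2j-1)}(\tfrac34\sqrt t)=O(t^{j})$ since $g$ is even (so $g^{(2j-1)}(0)=0$), and the integral $\int_1^\infty f=\tfrac{1}{2\sqrt{\pi t}}-\tfrac34+O(t)$; together with $\tfrac12 f(1)=\tfrac12+O(t)$ and an $O(t^m)$ remainder, this gives $\Theta(t)=\tfrac{1}{2\sqrt{\pi t}}-\tfrac14+O(t)$. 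With this correction in place, your final assembly via $e^{-t/4}\Theta(t)$ goes through unchanged.
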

From these two Lemmas and \eqref{eq:relationship} we immediately deduce this proposition.
\begin{proposition} As $t\to 0$,
\begin{equation}
\Tr e^{-t\Delta_{\Omega}} = \frac 18 t^{-1} - (\frac{\sqrt\pi}{8}+\frac{1}{4\sqrt\pi})t^{-1/2}-\frac 12c + (\frac{\sqrt\pi}{256}+\frac{1}{16\sqrt\pi})t^{1/2}+ O(t).
\end{equation}
\end{proposition}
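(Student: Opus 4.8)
The plan is to combine the three ingredients already assembled: the exact spectral identity \eqref{eq:relationship}, the short-time expansion of the disk heat trace $\Tr e^{-t\Delta_D}$ quoted from \cite{EGS_2017}, and the expansion of $H(t)$ obtained by composing Lemmas \ref{lem:difference} and \ref{lem:compute}.

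First I would feed Lemma \ref{lem:difference} into Lemma \ref{lem:compute} to get $H(t) = \tilde H(t) + O(t) = \tfrac{1}{2\sqrt\pi}t^{-1/2} + c - \tfrac{1}{8\sqrt\pi}t^{1/2} + O(t)$, where $c$ is the irrelevant constant of Lemma \ref{lem:compute}. Then I would substitute this together with the disk expansion into \eqref{eq:relationship} and collect powers of $t$. Each coefficient is a one-line check: at order $t^{-1}$ one gets $\tfrac12\cdot\tfrac14 = \tfrac18$; at order $t^{-1/2}$ one gets $-\tfrac12\cdot\tfrac{\sqrt\pi}{4} - \tfrac12\cdot\tfrac{1}{2\sqrt\pi} = -\tfrac{\sqrt\pi}{8} - \tfrac{1}{4\sqrt\pi}$; at order $t^{0}$ one gets a bounded constant $\tfrac{1}{12} - \tfrac{c}{2}$, which we simply rename, as it plays no role in the sequel; and at order $t^{1/2}$ one gets $\tfrac12\cdot\tfrac{\sqrt\pi}{128} + \tfrac12\cdot\tfrac{1}{8\sqrt\pi} = \tfrac{\sqrt\pi}{256} + \tfrac{1}{16\sqrt\pi}$. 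This is precisely the claimed expansion.

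I do not expect any real obstacle in this last step; it is pure bookkeeping, and the only thing to watch is the propagation of the $O(t)$ remainders, since all three inputs are accurate only through order $t^{1/2}$ with an $O(t)$ error and the conclusion inherits exactly that accuracy, which is all that is needed to read off the $t^{1/2}$ coefficient. The genuine analysis sits in the two lemmas: Lemma \ref{lem:difference} rests on the McMahon asymptotics for $j_{0,k}$ quoted above together with a tail estimate for $\sum_k\big(e^{-tj_{0,k}^2} - e^{-t((k-1/4)^2\pi^2 + 1/4)}\big)$, while Lemma \ref{lem:compute} is an Euler--Maclaurin (equivalently Poisson summation or Jacobi theta) evaluation of the sum defining $\tilde H(t)$. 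Those are the steps carrying the weight.
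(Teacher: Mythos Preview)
Your proposal is correct and is exactly the paper's approach: the paper simply states that the proposition follows immediately from the two lemmas together with \eqref{eq:relationship}, and you have spelled out precisely that arithmetic. The only cosmetic discrepancy is at order $t^0$, where the paper writes $-\tfrac12 c$ rather than $\tfrac{1}{12}-\tfrac{c}{2}$, absorbing the $\tfrac{1}{12}$ into the irrelevant constant just as you suggest.
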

\begin{remark} We could reverse engineer $c$ from the known expansion from \cite{NRS} but there is no need.
\end{remark}

We are now in a position to state what the contribution of each corner of the half-disk to the $t^{1/2}$ term is. The $t^{1/2}$ term above is the sum of a curvature integral over the boundary and a pair of identical corner contributions. From \cite{EGS_2017} we see that the $\frac{\sqrt\pi}{256}$ is precisely the curvature integral. Dividing the remainder in half shows
\begin{theorem} The contribution to the $t^{1/2}$ coefficient in the heat trace of a half-disk of radius 1 from each of its two corners is
\begin{equation}
\frac{1}{32\sqrt\pi}t^{1/2}.
\end{equation}
\end{theorem}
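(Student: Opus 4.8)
The plan is to extract the corner contribution from the explicit half-disk computation already carried out in the Proposition above, subtracting off the parts of the $t^{1/2}$ coefficient that are \emph{not} corner contributions. Specifically, I would invoke Theorem \ref{thm:thm1}, which says that for any curvilinear polygon the $t^{1/2}$ coefficient equals
\[
\frac{1}{256\sqrt\pi}\int_{\partial\Omega}\kappa^2\,ds + \sum_{j=1}^n \mathcal C_{1/2}(\alpha_j,\kappa_{j,+},\kappa_{j,-}),
\]
and apply it to the unit half-disk. The half-disk has two corners, each of angle $\pi/2$, with one adjacent side straight (the diameter) and one adjacent side of curvature $1$ (the semicircular arc); by symmetry the two corners give identical contributions, call their common value $\mathcal C$. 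So the $t^{1/2}$ coefficient is $\frac{1}{256\sqrt\pi}\int_{\partial\Omega}\kappa^2\,ds + 2\mathcal C$.

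Next I would compute the boundary curvature integral for the unit half-disk directly: $\kappa \equiv 0$ on the straight diameter and $\kappa \equiv 1$ on the semicircular arc of length $\pi$, so $\int_{\partial\Omega}\kappa^2\,ds = \pi$, giving $\frac{1}{256\sqrt\pi}\cdot\pi = \frac{\sqrt\pi}{256}$ for the boundary piece — exactly matching the $\frac{\sqrt\pi}{256}$ term appearing in the Proposition, which is the consistency check the surrounding text already flags (via the comparison with \cite{EGS_2017}). Then, reading off from the Proposition, the full $t^{1/2}$ coefficient is $\frac{\sqrt\pi}{256} + \frac{1}{16\sqrt\pi}$, so $2\mathcal C = \frac{1}{16\sqrt\pi}$, hence $\mathcal C = \frac{1}{32\sqrt\pi}$, which is the claimed per-corner contribution $\frac{1}{32\sqrt\pi}\,t^{1/2}$.

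There is really no single hard step here, since the analytical heavy lifting is done in Lemmas \ref{lem:difference} and \ref{lem:compute} (which feed the Proposition) and in Theorem \ref{thm:thm1}; the only thing to be careful about is the bookkeeping of which corners the half-disk has and that they are genuinely isospectral-irrelevant in the sense that Theorem \ref{thm:thm1} applies (the half-disk is an admissible curvilinear polygon with $n=2$). The mild subtlety worth a sentence is verifying that the $\frac{\sqrt\pi}{256}$ term in the Proposition is indeed the curvature integral and not contaminated by a corner term: this follows because $r_0(\pi/2,\kappa_+,\kappa_-)=\frac12\sqrt{\kappa_+^2+\kappa_-^2}$ depends only on the local curvatures at the corner and is finite, so the split in Theorem \ref{thm:thm1} between a global $\kappa^2$ integral and a sum of local corner terms is unambiguous, and the value $\frac{\sqrt\pi}{256}=\frac{1}{256\sqrt\pi}\int_{\partial\Omega}\kappa^2\,ds$ is forced. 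Dividing $\frac{1}{16\sqrt\pi}$ by $2$ then completes the proof.

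Note that this theorem, combined with \eqref{eq:form} and \eqref{eq:intermsofcurv}, is exactly what pins down $c_{1/2}(\pi/2)$: for the half-disk corner, $\kappa_+=1$, $\kappa_-=0$ (or vice versa), so $r_0(\pi/2,1,0)=\frac12$, and $\mathcal C_{1/2}(\pi/2,1,0)=c_{1/2}(\pi/2)\cdot\frac12 = \frac{1}{32\sqrt\pi}$ gives $c_{1/2}(\pi/2)=\frac{1}{16\sqrt\pi}$, establishing \eqref{eq:specialpi2}. I would state this consequence immediately after the proof of the theorem.
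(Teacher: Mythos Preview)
Your proposal is correct and follows essentially the same route as the paper: invoke the decomposition of Theorem~\ref{thm:thm1}, identify $\frac{\sqrt\pi}{256}$ as the boundary curvature integral, and halve the remaining $\frac{1}{16\sqrt\pi}$. The only minor difference is that you compute $\int_{\partial\Omega}\kappa^2\,ds=\pi$ directly rather than appealing to \cite{EGS_2017}, which is if anything a slight improvement in self-containment; your closing remark deriving $c_{1/2}(\pi/2)=\frac{1}{16\sqrt\pi}$ from $r_0(\pi/2,1,0)=\frac12$ is exactly the intended application.
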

Combined with \eqref{eq:form}, this identifies
\[
c_{1/2}\left(\frac{\pi}{2}\right)=\frac{\sqrt{2}}{16\sqrt{\pi}},
\]
and confirms that right-angled curved corners contribute nontrivially at order $t^{1/2}$. It remains only to prove the Lemmas.

\subsection{Proofs of the Lemmas}
First we prove Lemma \ref{lem:difference}.
\begin{proof} Rearrangement of the various expressions gives
\begin{equation}
H(t)-\tilde H(t) = e^{-t/4}\sum_{k=1}^{\infty}e^{-t(k-\frac 14)^2\pi^2}(1-e^{-t\cdot O(k^{-2})}).
\end{equation}
Here $O(k^{-2})$ is shorthand for $j_{0,k}^2-(k-\frac 14)^2\pi^2 - \frac 14$, which is bounded in absolute value by $Ck^{-2}$ for some universal constant $C$. Since $k\ge 1$ and $t\le 1$ is all that is relevant, we have that for $t\le 1$, $|1-e^{-t\cdot O(k^{-2})}|\le \tilde C tk^{-2}$ for a universal constant $\tilde C$. This implies
\begin{equation}
|H(t)-\tilde H(t)| \le \tilde Ct\sum_{k=1}^{\infty}e^{-t(k-\frac 14)^2\pi^2}k^{-2}.
\end{equation}
Estimating each exponential from above by 1 yields that the right-hand side is $O(t)$ as desired.
\end{proof}

Now we prove Lemma \ref{lem:compute}.
\begin{proof}
First write
\begin{equation}
\tilde H(t) = e^{-t/4}\hat H(t),
\end{equation}
where
\begin{equation}
\hat H(t) = \sum_{k=1}^{\infty}\exp[-t(k-\frac 14)^2\pi^2].
\end{equation}

We know from \cite{NRS} that $\Tr e^{-t\Delta_{\Omega}}$ has a full polyhomogeneous asymptotic expansion, and therefore by \eqref{eq:relationship}, so does $H(t)$. By Lemma \ref{lem:difference}, $\tilde H(t)$ has at least a polyhomogeneous expansion up to $O(t)$, and therefore so does $\hat H(t)$. The upshot of this is that one may define a zeta function
\begin{equation}
\hat \zeta(s) = \frac{1}{\Gamma(s)}\int_0^{\infty}\hat H(t) t^{s-1}\, dt,
\end{equation}
and that this zeta function, initially defined for $\Re(s)>>0$, has a meromorphic continuation at least to the half-plane $\Re(s) > -1$. Moreover, by the usual argument (agreement for $\Re(s)>>0$),
\begin{equation}
\hat\zeta(s) = \sum_{k=1}^{\infty}((k-\frac 14)\pi)^{-2s}.
\end{equation}

But now we recognize the zeta function $\hat\zeta(s)$ as a form of a Hurwitz zeta function. Specifically,
\begin{equation}
\hat\zeta(s) = \pi^{-2s}\zeta(2s,\frac 34),
\end{equation}
where $\zeta(s,a)$ is the Hurwitz zeta function. The function $\zeta(s,a)$ has a single, simple pole at $s=1$, with residue 1 \cite[25.11(i)]{NIST:DLMF}. So $\hat\zeta(s)$ has a single, simple pole at $s=1/2$, and its residue there is $\frac{1}{2\pi}$. Thus the function
\begin{equation}
\int_0^{\infty}\hat H(t)t^{s-1}\, dt = \Gamma(s)\hat\zeta(s)
\end{equation}
has poles at $s=1/2$ and also at each non-positive integer; moreover its residue at $s=1/2$ is $\Gamma(1/2)/(2\pi) = \frac{1}{2\sqrt{\pi}}$.

We can now reverse-engineer the expansion of $\hat H(t)$ as $t\to 0$, which is morally taking an inverse Mellin transform. We already know such a partial expansion exists up to order $O(t)$, and now we know that its coefficients are at orders $t^{-1/2}$, $t^0$, and higher integer powers of $t$ which are absorbed in $O(t)$. The coefficient of $t^{-1/2}$ is precisely the residue of $\Gamma(s)\hat\zeta(s)$ at $1/2$, which is $\frac{1}{2\sqrt{\pi}}$. So, for some constant $c$ which is not relevant for our purposes,
\begin{equation}
\hat H(t) = \frac{1}{2\sqrt{\pi}}t^{-1/2} + c + O(t).
\end{equation}
Multiplying by $e^{-t/4}\sim 1-\frac t4+O(t^2)$,
\begin{equation}
\tilde H(t)=\frac{1}{2\sqrt\pi}t^{-1/2} + c - \frac{1}{8\sqrt\pi}t^{1/2} + O(t),
\end{equation}
completing the proof.
\end{proof}

\subsection{A Neumann right-angle corner contribution}\label{app:neumann-pi2}

This section computes the order-$t^{1/2}$ contribution coming from the two right-angle corners of the
unit half-disk with Neumann boundary conditions. The computation uses the explicit Neumann spectrum on
the disk and half-disk, together with the smooth-boundary coefficient for Neumann boundary conditions.
The final outcome is an explicit value for the corner remainder in this model geometry, which we write
as $\mathcal C^N_{1/2}\bigl(\frac{\pi}{2},1,0\bigr)$.

Let $\Omega\subset\mathbb R^2$ be the unit half-disk
\[
\Omega=\{(r,\theta):0<r<1,\ 0<\theta<\pi\},
\]
endowed with Neumann boundary conditions along both the diameter and the semicircle. Let $D$ denote the
unit disk, also with Neumann boundary conditions, and write $\Delta^N_{\Omega}$ and $\Delta^N_{D}$ for the
corresponding Laplacians. For the unit disk with Neumann boundary conditions, the smooth-boundary heat
trace expansion gives (see Proposition 8 of \cite{EGS_2017})
\begin{equation}\label{eq:disk-Neumann-expansion}
\Tr e^{-t\Delta^N_{D}}
=\frac{1}{4}t^{-1}+\frac{\sqrt\pi}{4}t^{-1/2}+\frac16+\frac{5\sqrt\pi}{128}t^{1/2}+O(t).
\end{equation}

Separation of variables shows that the Neumann spectrum on the disk is
\[
\mathrm{spec}(\Delta_D^N)=\{0\}\ \cup\ \{(j'_{m,k})^2:\ m\in\mathbb N_0,\ k\in\mathbb N\},
\]
where $j'_{m,k}$ is the $k$th positive zero of $J_m'$, with multiplicity $1$ for $m=0$ and multiplicity $2$
for $m\ge1$ (the $\cos(m\theta)$ and $\sin(m\theta)$ modes). On the half-disk, the Neumann condition on
$\theta=0,\pi$ forces $\partial_\theta u=0$, hence only the $\cos(m\theta)$ modes occur, so each
$(j'_{m,k})^2$ has multiplicity $1$. Thus
\begin{equation}\label{eq:relationship-Neumann}
\Tr e^{-t\Delta^N_{\Omega}}=\frac12\,\Tr e^{-t\Delta^N_{D}}+\frac12\,H^N(t),
\end{equation}
where the correction term is precisely the $m=0$ contribution
\[
H^N(t):=\sum_{\lambda\in\{0\}\cup\{(j'_{0,k})^2\}_{k\ge1}}e^{-t\lambda}
=1+\sum_{k=1}^{\infty}e^{-t(j'_{0,k})^2}.
\]
Using $J_0'(x)=-J_1(x)$, this may be rewritten as
\begin{equation}\label{eq:HN-def}
H^N(t)=1+\sum_{k=1}^{\infty}e^{-t j_{1,k}^2},
\end{equation}
where $\{j_{1,k}\}_{k\ge1}$ are the positive zeros of $J_1$.

McMahon's asymptotics for Bessel zeros (cf.\ \cite[(10.21.19)]{NIST:DLMF}) give
\begin{equation}\label{eq:j1-mcmahon}
j_{1,k}=\Bigl(k+\frac14\Bigr)\pi-\frac{3}{8\bigl(k+\frac14\bigr)\pi}+O(k^{-3}),
\qquad k\to\infty,
\end{equation}
hence
\begin{equation}\label{eq:j1-square}
j_{1,k}^2=\Bigl(k+\frac14\Bigr)^2\pi^2-\frac34+O(k^{-2}).
\end{equation}
Define the model series
\begin{equation}\label{eq:HN-tilde}
\widetilde H^N(t):=
1+\sum_{k=1}^{\infty}\exp \Bigl[-t\Bigl(\Bigl(k+\frac14\Bigr)^2\pi^2-\frac34\Bigr)\Bigr].
\end{equation}

\begin{lemma}\label{lem:HN-difference}
As $t\to0$ one has $H^N(t)-\widetilde H^N(t)=O(t)$.
\end{lemma}

\begin{proof}
This is identical to the proof of Lemma \ref{lem:difference}.
\end{proof}

\begin{lemma}\label{lem:HN-compute}
As $t\to0$,
\begin{equation}\label{eq:HN-expansion}
\widetilde H^N(t)=\frac{1}{2\sqrt\pi}\,t^{-1/2}+c_N+\frac{3}{8\sqrt\pi}\,t^{1/2}+O(t),
\end{equation}
for some constant $c_N\in\mathbb R$.
\end{lemma}

\begin{proof}
Write $\widetilde H^N(t)=e^{3t/4}\,\widehat H^N(t)$ with
\[
\widehat H^N(t):=e^{-3t/4}\widetilde H^N(t)
=e^{-3t/4}+\sum_{k=1}^{\infty}\exp \Bigl[-t\Bigl(k+\frac14\Bigr)^2\pi^2\Bigr].
\]
As in the proof of Lemma \ref{lem:compute}, the existence of a polyhomogeneous expansion for
$\widehat H^N(t)$ through $O(t)$ allows one to define the Mellin transform
\[
\widehat\zeta_N(s)=\frac{1}{\Gamma(s)}\int_0^\infty \widehat H^N(t)\,t^{s-1}\,dt
=\sum_{k=0}^{\infty}\bigl(\pi(k+\tfrac54)\bigr)^{-2s}
=\pi^{-2s}\zeta(2s,\tfrac54),
\]
initially for $\Re(s)>>0$ and then by meromorphic continuation.

Since $\zeta(2s,\tfrac54)$ has a simple pole at $2s=1$ with residue $1$, the function $\widehat\zeta_N$
has a simple pole at $s=\tfrac12$ with residue $1/(2\pi)$. It follows that
$\int_0^\infty \widehat H^N(t)t^{s-1}\,dt=\Gamma(s)\widehat\zeta_N(s)$ has residue
$\Gamma(1/2)/(2\pi)=1/(2\sqrt\pi)$ at $s=1/2$, and hence
\[
\widehat H^N(t)=\frac{1}{2\sqrt\pi}\,t^{-1/2}+c_N+O(t).
\]
Multiplying by $e^{3t/4}=1+\frac34 t+O(t^2)$ yields \eqref{eq:HN-expansion}.
\end{proof}

Combining Lemmas \ref{lem:HN-difference} and \ref{lem:HN-compute} shows that $H^N(t)$ itself satisfies
\eqref{eq:HN-expansion} (with a possibly different constant term), up to $O(t)$.
Substituting \eqref{eq:disk-Neumann-expansion} and \eqref{eq:HN-expansion} into
\eqref{eq:relationship-Neumann} gives the half-disk expansion
\begin{equation}\label{eq:half-Neumann-expansion}
\Tr e^{-t\Delta^N_{\Omega}}
=\frac18\,t^{-1}
+\Bigl(\frac{\sqrt\pi}{8}+\frac{1}{4\sqrt\pi}\Bigr)t^{-1/2}
+\text{\emph{(const)}}
+\Bigl(\frac{5\sqrt\pi}{256}+\frac{3}{16\sqrt\pi}\Bigr)t^{1/2}
+O(t).
\end{equation}

The coefficient of $t^{1/2}$ in \eqref{eq:half-Neumann-expansion} may be split into a smooth-boundary term and a corner remainder by subtracting the smooth-boundary contribution for Neumann boundary conditions. For the unit disk, $\int_{\partial D}\kappa^2\,ds=2\pi$, and the $t^{1/2}$ coefficient $\frac{5\sqrt\pi}{128}$ in \eqref{eq:disk-Neumann-expansion} corresponds to the smooth-boundary constant $\frac{5}{256\sqrt\pi}$. For the half-disk, $\partial\Omega$ consists of a semicircle of curvature $1$ and length $\pi$ and a diameter of curvature $0$, so
\[
\frac{5}{256\sqrt\pi}\int_{\partial\Omega}\kappa^2\,ds=\frac{5}{256\sqrt\pi}\cdot\pi=\frac{5\sqrt\pi}{256}.
\]
Subtracting this from the $t^{1/2}$ coefficient in \eqref{eq:half-Neumann-expansion} leaves a remainder $\frac{3}{16\sqrt\pi}$. The two endpoints of the semicircle are congruent corners, so they contribute equally. This motivates the definition
\begin{equation}\label{eq:C12N-pi2}
\mathcal C^N_{1/2}\Bigl(\frac{\pi}{2},1,0\Bigr):=\frac12\cdot\frac{3}{16\sqrt\pi}
=\frac{3}{32\sqrt\pi}.
\end{equation}
Here $(\kappa_+,\kappa_-)=(1,0)$ records that one incident boundary arc has curvature $1$ (the semicircle) and the other has curvature $0$ (the diameter).

\begin{remark}
Equation \eqref{eq:C12N-pi2} records the corner value
$\mathcal C^N_{1/2}\bigl(\frac{\pi}{2},1,0\bigr)=\frac{3}{32\sqrt\pi}$.
If a Neumann factorization of the form
\[
\mathcal C^N_{1/2}(\alpha,\kappa_+,\kappa_-)=c^N_{1/2}(\alpha)\,\frac{\kappa_+ + \kappa_-}{4\sin(\alpha/2)}
\]
holds for $\alpha=\pi/2$, then since $(\kappa_+,\kappa_-)=(1,0)$ in \eqref{eq:C12N-pi2}, one obtains
\[
c^N_{1/2}\Bigl(\frac{\pi}{2}\Bigr)
=\frac{\mathcal C^N_{1/2}\bigl(\frac{\pi}{2},1,0\bigr)}{1/(2\sqrt{2})}
=\frac{3\sqrt{2}}{16\sqrt\pi}.
\]
\end{remark}

\section{A formula for $c_{1/2}(\alpha)$}\label{section:sign}

The goal of this section is to prove \eqref{eq:moreexplicitc} and \eqref{eq:sign}. To this end, let
\[
  W_\alpha=\{(r,\theta):r>0,\ 0<\theta<\alpha\},
  \qquad 0<\alpha<2\pi,
\]
and let
\[
  \HH=\{(r,\theta):r>0,\ 0<\theta<\pi\}
\]
be the half-plane tangent to $W_\alpha$ along the side $\theta=0$.

\begin{theorem}
\label{thm:mainsign} Equation \eqref{eq:sign} holds:
\[
  \operatorname{sgn} c_{1/2}(\alpha)=\operatorname{sgn}(\pi-\alpha),
  \qquad 0<\alpha<2\pi.
\]
\end{theorem}

The proof is carried out by constructing a specific family of domains $\Omega_{\eps}$ and taking the derivative of the heat trace at $\eps=0$.

In order to construct our family of domains, first consider the holomorphic map from $\mathbb C\to\mathbb C$ given by
\begin{equation}
\label{eq:Phi}
  \Phi_\eps(z)=z+\eps e^{i(\pi-\alpha/2)}z^2.
\end{equation}
We analyze the image of $W_{\alpha}$ under this map. First consider the lower side $\theta=0$.  A point on that side is $z=s$,
$s>0$.  Its image is
\[
  \gamma_+(s)=\Phi_\eps(s)=s+\eps e^{i(\pi-\alpha/2)}s^2.
\]
At $s=0$,
\[
  \gamma_+'(0)=1,
  \qquad
  \gamma_+''(0)=2\eps e^{i(\pi-\alpha/2)}.
\]
The inward unit normal to the lower side is $i=(0,1)$.  Since
$|\gamma_+'(0)|=1$, the inward curvature at the vertex is
\begin{equation}
\label{eq:kappa-plus}
  \kappa_+=\gamma_+''(0)\cdot i
  =2\eps\sin(\pi-\alpha/2)
  =2\eps\sin(\alpha/2).
\end{equation}

On the upper side $\theta=\alpha$, a point is $z=se^{i\alpha}$.  Its
image is
\[
  \gamma_-(s)=\Phi_\eps(se^{i\alpha})
  =se^{i\alpha}+\eps s^2e^{i(\pi+3\alpha/2)}.
\]
At $s=0$,
\[
  \gamma_-'(0)=e^{i\alpha},
  \qquad
  \gamma_-''(0)=2\eps e^{i(\pi+3\alpha/2)}.
\]
The inward unit normal to the upper side is the clockwise rotation of
$e^{i\alpha}$, namely $e^{i(\alpha-\pi/2)}$.  Hence
\begin{align}
\label{eq:kappa-minus}
  \kappa_-
  &=2\eps\cos\bigl((\pi+3\alpha/2)-(\alpha-\pi/2)\bigr)  \\
  &=2\eps\cos(3\pi/2+\alpha/2)
    =2\eps\sin(\alpha/2).
\end{align}
Thus
\begin{equation}
\label{eq:kappa-symmetric}
  \kappa_+=\kappa_-=2\eps\sin(\alpha/2).
\end{equation}
Substitution into \eqref{eq:form} gives
\begin{equation}
\label{eq:C-equals-eps-c}
  \calC_{1/2}(\alpha,\kappa_+,\kappa_-)
  =c_{1/2}(\alpha)\frac{4\eps\sin(\alpha/2)}{4\sin(\alpha/2)}
  =\eps c_{1/2}(\alpha).
\end{equation}
Therefore $c_{1/2}(\alpha)$ is the derivative, at $\eps=0$, of the local curved-corner coefficient for this symmetric deformation.

We construct $\Omega_{\eps}$ by using this deformation as a local model. To do this, it is helpful to think in terms of the normal velocity of the deformation. On the
lower side,
\[
  \left.\partial_\eps\Phi_\eps(s)\right|_{\eps=0}
  =s^2e^{i(\pi-\alpha/2)}
  =s^2\bigl(-\cos(\alpha/2)+i\sin(\alpha/2)\bigr).
\]
The outward unit normal to $W_\alpha$ along $\theta=0$ is $-i=(0,-1)$.  Hence
\begin{equation}
\label{eq:V-lower}
  V_0(s)=s^2e^{i(\pi-\alpha/2)}\cdot(-i)
  =-s^2\sin(\alpha/2)
\end{equation}
where $V_0$ is the outward normal velocity of the boundary deformation along the lower side of the wedge $W_\alpha$.

On the upper side,
\[
  \left.\partial_\eps\Phi_\eps(se^{i\alpha})\right|_{\eps=0}
  =s^2e^{i(\pi+3\alpha/2)}.
\]
The outward unit normal to $W_\alpha$ along $\theta=\alpha$ is the counterclockwise rotation of $e^{i\alpha}$, namely $e^{i(\alpha+\pi/2)}$.
Therefore
\begin{align}
\label{eq:V-upper}
  V_\alpha(s)
  &=s^2\cos\bigl((\pi+3\alpha/2)-(\alpha+\pi/2)\bigr) \\
  &=s^2\cos(\pi/2+\alpha/2)
    =-s^2\sin(\alpha/2).
\end{align}
Thus both sides have the same outward velocity
\begin{equation}
\label{eq:V-both}
  V(s)=-s^2\sin(\alpha/2).
\end{equation}
For $0<\alpha<2\pi$, the factor $\sin(\alpha/2)$ is positive.  Hence, for $\eps>0$, the deformation has positive inward curvature and moves both sides inward.

Finally, fix $\Omega=\Omega_0$ to be a curvilinear polygon with one corner at the origin, which is isometric to $W_{\alpha}$ on a ball of radius 1 about the origin. Let $\chi(R):\mathbb R_+\to\mathbb R_+$ be a smooth cutoff function, supported on $\{R<1\}$ and identically 1 on $\{R<1/2\}$. Then let $\Omega_{\eps}$ be the family of domains obtained by taking $\Omega_0$ and applying the normal velocity 
\[\chi(s)V(s).\]
Outside the ball $\{R\le 1\}$, $\Omega_{\eps}$ is isometric to $\Omega_0$. On the ball $\{R\le\frac 12\}$, $\Omega_{\eps}$ is isometric to the deformation of the wedge by the holomorphic map $\Phi_{\eps}$. In particular, we still have \eqref{eq:C-equals-eps-c}: $c_{1/2}(\alpha)$ is the derivative at $\eps=0$ of the local curved-corner coefficient for the heat trace of $\Omega_{\eps}$.

To get a handle on this heat trace, we consider a more general domain deformation setup. For any Lipschitz domain $\Omega\subseteq\mathbb R^2$, and any point $q\in\partial\Omega$ where the outward normal $\nu_q$ is defined, we let the positive Dirichlet heat Poisson kernel be
\begin{equation}
\label{eq:Poisson-def}
  \PH_\Omega(t;q,z)=-\partial_{\nu_q}H_\Omega(t;q,z),
  \qquad z\in \Omega.
\end{equation}
The sign is chosen so that $\PH_D$ is positive. Now let
\begin{equation}
\label{eq:B-def}
  \BH_\Omega(t,q)=\int_\Omega \PH_\Omega(t/2;q,z)^2\,dz.
\end{equation}

\begin{proposition}
    For a smoothly varying family $\Omega_{\eps}$ of bounded curvilinear polygons, with normal velocity $V(q)$, we have
    \begin{equation}
\label{eq:epsderiv-of-trace}
  \left.\frac{d}{d\eps}\Tr e^{-t\Delta^D_{\Omega_\eps}}\right|_{\eps=0}
  =t\int_{\partial\Omega_0}V(q)\BH_{\Omega_0}(t,q)\,dq.
\end{equation}
\end{proposition}
\begin{remark}
    All integrals are taken over the regular part of the boundary, away from the vertices. The vertices, being a finite set, contribute nothing to the right-hand side.
\end{remark}
\begin{remark}
    An outward motion increases the heat trace, as it should.
\end{remark}
\begin{proof}
Let $\{u_j\}$ be an orthonormal Dirichlet eigenbasis of $\Omega$, with $-\Delta u_j=\lambda_j u_j$.  Then
\[
  H_\Omega(t;x,y)=\sum_j e^{-t\lambda_j}u_j(x)u_j(y),
\]
and hence for any $q$ on $\partial\Omega$ which is not a vertex,
\[
  \PH_\Omega(t/2;q,z)
  =\sum_j e^{-t\lambda_j/2}\bigl(-\partial_\nu u_j(q)\bigr)u_j(z).
\]
Squaring and integrating over $z$ gives, by orthonormality,
\begin{equation}
\label{eq:B-spectral}
  \BH_\Omega(t,q)=\sum_j e^{-t\lambda_j}\bigl(\partial_\nu u_j(q)\bigr)^2.
\end{equation}
Note also that
\[
  \partial_{\nu_x}\partial_{\nu_y}H_\Omega(t;x,y)|_{x=y=q}
  =\sum_j e^{-t\lambda_j}\bigl(\partial_\nu u_j(q)\bigr)^2.
\]
So
\begin{equation}
\label{eq:B-normal}
  \BH_\Omega(t,q)=\partial_{\nu_x}\partial_{\nu_y}H_\Omega(t;x,y)|_{x=y=q}.
\end{equation}

The Dirichlet eigenvalue variation formula, which holds in the general Lipschitz setting, now says
\begin{equation}
\label{eq:eigenvalue-hadamard}
  \lambda_j'(0)=-\int_{\partial\Omega_0}V(q)
  \bigl(\partial_\nu u_j(q)\bigr)^2\,dq.
\end{equation}
The sign is correct because an outward deformation lowers Dirichlet eigenvalues. Differentiating the heat trace gives
\begin{align}
\label{eq:heat-hadamard-derivation}
  \left.\frac{d}{d\eps}\Tr e^{-t\Delta^D_{\Omega_\eps}}\right|_{\eps=0}
  &=-t\sum_j e^{-t\lambda_j}\lambda_j'(0)  \\
  &=t\int_{\partial\Omega_0}V(q)
    \sum_j e^{-t\lambda_j}\bigl(\partial_\nu u_j(q)\bigr)^2\,dq.
\end{align}
Using \eqref{eq:B-spectral}, this yields the desired statement. \end{proof}

\begin{proof}[Proof of Theorem \ref{thm:mainsign}]
We now specialize to our family $\Omega_{\eps}$ constructed earlier and analyze both sides of \eqref{eq:epsderiv-of-trace}.

First, we claim that the left-hand side, and hence both sides, have a complete asymptotic expansion as $t\to 0$. This follows from \cite{NRS}, as the construction there may be repeated verbatim with the smoothly varying parameter $\eps$. (The domain deformation may be interpreted as a variation of the underlying metric). The heat trace $\Tr e^{-t\Delta_{\Omega_{\eps}}^D}$ thus has an expansion as $t\to 0$ with all coefficients smooth in $\eps$. Taking the $\varepsilon$-derivative at $\eps=0$ of this expansion yields a complete asymptotic expansion of the left-hand side.

Now we discuss the character of the expansion. The coefficients of the expansion of the left-hand side of \eqref{eq:epsderiv-of-trace} are precisely the first variations of the coefficients of the expansion of $\Tr e^{-t\Delta_{\Omega_{\eps}}^D}$. In other words, the heat-trace expansion of the family has the form
\[
\Tr e^{-t\Delta^D_{\Omega_\eps}} \sim \sum_k c_k(\eps)\, t^{a_k}(\log t)^{b_k},
\]
in which the indices $(a_k, b_k)$ are independent of $\eps$ and the coefficients $c_k(\eps)$ depend smoothly on $\eps$. Differentiation at $\eps = 0$ therefore acts on the coefficients alone:
\[
\frac{d}{d\eps}\Tr e^{-t\Delta^D_{\Omega_\eps}}\bigg|_{\eps=0} \sim \sum_k c_k'(0)\, t^{a_k}(\log t)^{b_k}.
\]
The coefficient at each order on the left-hand side of \eqref{eq:epsderiv-of-trace} is thus the first variation, at $\eps=0$, of the corresponding heat invariant of $\Omega_\eps$. We now compute these variations order by order.
\begin{itemize}
    \item The leading term, $t^{-1}$, is the area variation, which is
\[
\frac{1}{4\pi t}\int_{\partial\Omega}\chi(q)V(q)\, dq.
\]
\item The coefficient of $t^{-1/2}$ is the first variation of the boundary length. This is zero, since the varying part of $\partial\Omega_0$ is initially straight and the cutoff kills the endpoint terms.
\item The coefficient of $t^0$ is the variation of the integral of the boundary curvature, plus the variation of the angle. But the angle is fixed, and Gauss-Bonnet fixes the integral of the boundary curvature. Thus this first variation is zero.
\item The coefficient of $t^{1/2}$ is the first variation of the corner contribution, plus the first variation of $\int\kappa^2$. The latter is zero, since $\kappa=0$ on the part of $\Omega_0$ that varies. Hence the variation of $\kappa^2$ is $2\dot\kappa\kappa=0$.  Thus the coefficient of $t^{1/2}$ is the first variation of the corner contribution, which by \eqref{eq:C-equals-eps-c} is $c_{1/2}(\alpha)$.
\end{itemize}
All in all, we have just shown that the left-hand side of \eqref{eq:epsderiv-of-trace} satisfies
\begin{equation}\label{eq:lhsexpansion}
      \left.\frac{d}{d\eps}\Tr e^{-t\Delta^D_{\Omega_\eps}}\right|_{\eps=0} =
      \frac{1}{4\pi t}\int_{\partial\Omega}\chi(q)V(q)\, dq
      + c_{1/2}(\alpha)t^{1/2} + O(t\log t).
\end{equation}

Now we analyze the right-hand side of \eqref{eq:epsderiv-of-trace}. First make the observation that
\[
t\int_{\partial\Omega} \chi(q)V(q)\BH_{\Omega_0}(t,q)
-
t\int_{\partial\Omega}\chi(q)V(q)\BH_{W_{\alpha}}(t,q)
=
O(t^{\infty}).
\]
This is true because $W_{\alpha}$ and $\Omega_0$ are isometric on the support of $\chi$ and because all relevant heat kernels decay rapidly away from the diagonal. (Indeed the error is exponentially small, but we do not need this.) We now add and subtract a half-plane term along each of the two sides:
\[
t\int_{\partial\Omega} \chi(q)V(q)\BH_{\Omega_0}(t,q)
=
t\int_{\partial\Omega} \chi(q)V(q)(\BH_{W_{\alpha}}-\BH_{\HH})(t,q)
+
t\int_{\partial\Omega} \chi(q)V(q)\BH_{\HH}(t,q)
+
O(t^{\infty}).
\]
By a direct calculation with the half-plane heat kernel,
\begin{equation}\label{eq:rhsexpansion}
t\int_{\partial\Omega} \chi(q)V(q)\BH_{\Omega_0}(t,q)
=
t\int_{\partial\Omega} \chi(q)V(q)(\BH_{W_{\alpha}}-\BH_{\HH})(t,q)
+
\frac{1}{4\pi t}\int_{\partial\Omega} \chi(q)V(q)\, dq
+
O(t^{\infty}).
\end{equation}
Comparing \eqref{eq:lhsexpansion} and \eqref{eq:rhsexpansion}, using \eqref{eq:epsderiv-of-trace}, yields
\begin{equation}\label{eq:comparedexpansions}
t\int_{\partial\Omega} \chi(q)V(q)(\BH_{W_{\alpha}}-\BH_{\HH})(t,q) =t^{1/2}c_{1/2}(\alpha) + O(t\log t).\end{equation}
After subtracting the half-plane contribution, the $t^{-1}$ term cancels. The $t^{-1/2}$ and $t^0$ variations vanish for this deformation, so the first remaining term is the variation of the curved-corner coefficient, namely $c_{1/2}(\alpha)t^{1/2}$.

But now observe the following consequence of domain monotonicity:
\begin{lemma}
\label{lem:B-monotone}
Let $D_1\subsetneq D_2$ be two connected domains, and suppose that near a
boundary point $q$ they share the same smooth boundary side and the same outward
normal.  Then, for every $t>0$,
\[
  \BH_{D_1}(t,q)<\BH_{D_2}(t,q),
\]
where $\BH_D(t,q)=\|\PH_D(t/2;q,\cdot)\|_{L^2(D)}^2$.
\end{lemma}

\begin{proof}
Dirichlet heat kernels are monotone under domain inclusion:
\begin{equation}
\label{eq:kernel-monotone}
  0<H_{D_1}(\tau;x,z)\le H_{D_2}(\tau;x,z),
  \qquad x,z\in D_1,
\end{equation}
for every $\tau>0$.  This follows from the killed Brownian motion representation, or from the maximum principle.

Fix $z\in D_1$ and set
\[
  u(\tau,x)=H_{D_2}(\tau;x,z)-H_{D_1}(\tau;x,z),
  \qquad x\in D_1.
\]
Then $u\ge0$ and $u$ satisfies the heat equation in $(0,\infty)\times D_1$.  At the common boundary point $q$, both heat kernels vanish, so $u(\tau,q)=0$.  The parabolic Hopf lemma gives
\[
  \partial_{n_{\rm in}}u(\tau,q)>0,
\]
unless $u$ is identically zero.  It is not identically zero because $D_2$ is a proper larger connected domain.  Since
\[
  \PH_D(\tau;q,z)=\partial_{n_{\rm in}}H_D(\tau;q,z),
\]
we obtain
\begin{equation}
\label{eq:P-monotone}
  0<\PH_{D_1}(\tau;q,z)<\PH_{D_2}(\tau;q,z),
  \qquad z\in D_1.
\end{equation}
Extend $\PH_{D_1}$ by zero to $D_2\setminus D_1$.  With $\tau=t/2$,
\begin{align*}
  \BH_{D_2}(t,q)-\BH_{D_1}(t,q)
  &=\int_{D_2}\PH_{D_2}(\tau;q,z)^2\,dz
    -\int_{D_1}\PH_{D_1}(\tau;q,z)^2\,dz  \\
  &=\int_{D_1}\bigl(\PH_{D_2}^2-\PH_{D_1}^2\bigr)(\tau;q,z)\,dz
    +\int_{D_2\setminus D_1}\PH_{D_2}(\tau;q,z)^2\,dz.
\end{align*}
The first integrand is positive on a set of positive measure, and the second term is nonnegative.  Hence the difference is strictly positive.
\end{proof}

This allows us to complete the proof of Theorem~\ref{thm:mainsign} whenever $\alpha\ne\pi$. Assume first that $0<\alpha<\pi$.  Then
\[
  W_\alpha\subset \HH.
\]
The two domains share the side $\theta=0$ near $q_s=(s,0)$, and Lemma
\ref{lem:B-monotone} gives
\begin{equation}
\label{eq:B-convex-order}
  \BH_{W_\alpha}(1,s)<\BH_\HH(1,s),
  \qquad s>0.
\end{equation}
Therefore the integrand in \eqref{eq:comparedexpansions} is strictly positive for every $s>0$, and thus $c_{1/2}(\alpha)>0$. The reverse argument immediately gives the case $\pi<\alpha<2\pi$. 
\end{proof}

Finally, we prove \eqref{eq:moreexplicitc}. Let $q_s=(s,0)$, and set
\[
D_\alpha(s)=\BH_{\HH}(1,q_s)-\BH_{W_\alpha}(1,q_s).
\]
The ray $\theta=\alpha$ gives the same contribution, after reflecting the sector across its bisector. Also, for any cone $C$,
\[
\BH_C(t,q_s)=t^{-2}\BH_C(1,q_{s/\sqrt t}).
\]
Using $V(s)=-s^2\sin(\alpha/2)$, we get
\[
\begin{aligned}
t\int_{\partial\Omega}\chi V(\BH_{W_\alpha}-\BH_{\HH})
&=
2t\int_0^\infty \chi(s)(-s^2\sin(\alpha/2))
(\BH_{W_\alpha}-\BH_{\HH})(t,q_s)\,ds  \\
&=
2\sin(\alpha/2)t^{1/2}
\int_0^\infty \chi(\sqrt t\,\sigma)\sigma^2D_\alpha(\sigma)\,d\sigma.
\end{aligned}
\]
Thus \eqref{eq:comparedexpansions} gives
\[
2\sin(\alpha/2)
\int_0^\infty \chi(\sqrt t\,\sigma)\sigma^2D_\alpha(\sigma)\,d\sigma
=
c_{1/2}(\alpha)+O(t^{1/2}\log t).
\]
Since $D_\alpha(\sigma)$ decays rapidly as $\sigma\to\infty$, the cutoff drops out in the limit. Hence we obtain \eqref{eq:moreexplicitc}:
\begin{equation}
c_{1/2}(\alpha)
=
2\sin(\alpha/2)\int_0^\infty s^2
\bigl(\BH_{\HH}(1,q_s)-\BH_{W_\alpha}(1,q_s)\bigr)\,ds, \qquad \alpha\in(0,2\pi).
\end{equation}

In the case $\alpha=\pi/N$, the formula for $\int_0^\infty s^2
\bigl(\BH_{\HH}(1,q_s)-\BH_{W_\alpha}(1,q_s)\bigr)\,ds$ can be evaluated by images. Let
\[
\theta_k=\frac{\pi k}{N},
\qquad k=1,\ldots,N-1.
\]
A straightforward computation with the Dirichlet image formula for $W_{\pi/N}$, one image at a time, gives, along the ray $\theta=0$,
\begin{equation}\label{eq:B-image-difference}
\BH_{\HH}(1,q_s)-\BH_{W_{\pi/N}}(1,q_s)
=
\frac{1}{4\pi}
\sum_{k=1}^{N-1}
e^{-s^2\sin^2\theta_k}
\left(
2s^2\sin^2\theta_k\cos^2\theta_k
-\cos(2\theta_k)
\right).
\end{equation}
Specifically, for $\alpha=\pi/N$ we use the finite image formula for the Dirichlet heat kernel in a sector \cite{Carslaw1898}. Let $\theta_k=\pi k/N$. Differentiating the image formula as in \eqref{eq:B-normal}, and evaluating at $x=y=q_s$ on the ray $\theta=0$, gives 
\[ \partial_{\nu_x}\partial_{\nu_y}H_{W_{\pi/N}}(1;x,y)|_{x=y=q_s} = \frac{1}{4\pi} + \frac{1}{4\pi}\sum_{k=1}^{N-1} e^{-s^2\sin^2\theta_k} \left( \cos(2\theta_k) - 2s^2\sin^2\theta_k\cos^2\theta_k \right). 
\] 
Since $\BH_{\HH}(1,q_s)=1/(4\pi)$ by \eqref{eq:B-normal}, subtraction gives \eqref{eq:B-image-difference}. Substitution into $\int_0^\infty s^2
\bigl(\BH_{\HH}(1,q_s)-\BH_{W_\alpha}(1,q_s)\bigr)\,ds$ and the elementary identities
\[
\int_0^\infty s^2 e^{-as^2}ds
=
\frac{\sqrt{\pi}}{4a^{3/2}},
\qquad
\int_0^\infty s^4 e^{-as^2}ds
=
\frac{3\sqrt{\pi}}{8a^{5/2}}
\]
then give
\[
\begin{aligned}
I_{\pi/N}
&=
\frac{1}{4\pi}
\sum_{k=1}^{N-1}
\left[
2\sin^2\theta_k\cos^2\theta_k
\frac{3\sqrt{\pi}}{8\sin^5\theta_k}
-
\cos(2\theta_k)
\frac{\sqrt{\pi}}{4\sin^3\theta_k}
\right]  \\
&=
\frac{1}{16\sqrt{\pi}}
\sum_{k=1}^{N-1}
\frac{1+\cos^2\theta_k}{\sin^3\theta_k},
\end{aligned}
\]
which is \eqref{eq:I-pi-over-N}.

\subsection{The case $\alpha=\pi$}\label{ss:alpha_equals_pi}
It remains to identify the local coefficient at a marked straight angle. Write
\[
  C_\pi(\kappa_+,\kappa_-)
  :=
  \mathcal C_{1/2}(\pi,\kappa_+,\kappa_-).
\]
By locality and dilation invariance of the heat expansion, $C_\pi$ is homogeneous of degree one:
\[
  C_\pi(\lambda\kappa_+,\lambda\kappa_-)
  =
  \lambda C_\pi(\kappa_+,\kappa_-),
  \qquad \lambda>0.
\]

Let $\Omega_0$ agree near the marked point with
\[
  \HH=\{(s,u):u>0\}.
\]
Choose $V$ compactly supported on the boundary line, smooth away from $s=0$, with $V(0)=V'(0)=0$, and with
\[
  V(s)=-\frac a2s^2 \quad (s>0),
  \qquad
  V(s)=-\frac b2s^2 \quad (s<0)
\]
near $0$. Set
\[
  \Omega_\eps=\{(s,u):u>-\eps V(s)\}
\]
near the marked point and leave the rest of the boundary fixed. Then the marked point has angle $\pi$ and one-sided inward curvatures
\[
  \kappa_+(\eps)=\eps a,
  \qquad
  \kappa_-(\eps)=\eps b .
\]
For the half-plane,
\[
  \BH_{\HH}(t,s)=\frac{1}{4\pi t^2}.
\]
Using \eqref{eq:epsderiv-of-trace}, and replacing the local domain by the half-plane up to an exponentially small error, gives
\[
  \left.\frac{d}{d\eps}\Tr e^{-t\Delta^D_{\Omega_\eps}}\right|_{\eps=0}
  =
  \frac{1}{4\pi t}\int V(s)\,ds+O(e^{-c/t}).
\]
There is no $t^{1/2}$ term. The first variation at $\eps=0$ of
\[
  \frac{1}{256\sqrt\pi}
  \int_{\partial\Omega_\eps}\kappa_\eps^2\,ds
\]
also vanishes. Hence
\[
  0
  =
  \left.\frac{d}{d\eps}\right|_{\eps=0}
  C_\pi(\eps a,\eps b)
  =
  C_\pi(a,b),
\]
where the last equality uses homogeneity. Since $a$ and $b$ are arbitrary,
\[
  \mathcal C_{1/2}(\pi,\kappa_+,\kappa_-)=0 .
\]

\bibliographystyle{plain}  %
\bibliography{bibliography}

\end{document}